\newcommand{\dd}{\mathrm{d}}
\newcommand{\ddiv}{\mathrm{div\,}}
\renewcommand{\div}{\mathrm{div}}
\newcommand{\rot}{\mathrm{rot}}
\newcommand{\cP}{\mathcal{P}}
\newcommand{\cQ}{\mathcal{Q}}
\def\XXint#1#2#3{{\setbox0=\hbox{$#1{#2#3}{\int}$}
    \vcenter{\hbox{$#2#3$}}\kern-.5\wd0}}
\newtheorem{tw}{Theorem}
\newtheorem{lem}[tw]{Lemma}
\newtheorem{prop}[tw]{Proposition}
\newtheorem{rem}[tw]{Remark}
\numberwithin{equation}{section}
\numberwithin{tw}{section}
\title{A multifluid model with chemically reacting components -- construction of weak solutions}
\author{Piotr B. Mucha$^1$, \v{S}\'{a}rka Ne\v{c}asov\'{a}$^2$, Maja Szlenk$^1$}
\date{}
\begin{document}

\maketitle

\begin{center}
1. University of Warsaw, Institute of Applied Mathematics and Mechanics \\
ul Banacha 2, 02-097 Warszawa, Poland \\
Email: p.mucha@mimuw.edu.pl, m.szlenk@uw.edu.pl\\
2.Institute of Mathematics, Czech Academy of Sciences, \\
\v Zitn\'a 25, 115 67 Praha 1, Czech Republic\\
Email: matus@math.cas.cz

\end{center}

\begin{abstract}
   We investigate the existence of weak solutions to a multi-component system, consisting of compressible chemically reacting components, coupled with the  compressible Stokes equation for the velocity. Specifically, we consider the case of irreversible chemical reactions and assume a nonlinear relation between the pressure and the particular densities. These assumptions cause the additional difficulties in the mathematical analysis, due to the possible presence of vacuum.  
   
   It is shown that there exists a global weak solution, satisfying the $L^\infty$ bounds for all the components. We obtain strong compactness of the sequence of densities in $L^p$ spaces, under the assumption that all components are strictly positive. The applied method captures the properties of models of high generality, which admit an arbitrary number of components. Furthermore, the framework that we develop can handle models that contain both diffusing and non-diffusing elements.
\end{abstract}

\noindent
\textbf{keywords:} compressible Stokes system, multi-component flow, weak solutions, irreversible chemical reaction

\noindent
\textbf{MSC:} {\bf 35Q35,76N10, 35D30}

\section{Description of the multifluid system}
In recent years, a significant progress has been done in the mathematical theory of mixtures, both compressible and incompressible. The physical background for the modeling of mixtures can be found in \cite{Pr}, an approach closer to modern understanding of continuum mechanics and thermodynamics  in \cite{RaTa}. Let us mention also the book \cite{Gi}, where a general model of compressible chemically reacting mixtures under very general conditions is analyzed from the mathematical point of view, however, only in a small neighborhood of a given static solution. We will be more concerned about results which are not restricted to small data. As the results differ with respect to studied complexity, we mention here for the case of incompressible fluids the paper \cite{Ro} and the book \cite{Ot}.

In the paper we analyze the system of $N$ compressible, reacting fluids with densities $$\varrho_1, \dots, \varrho_N\colon [0,T]\times\Omega \; \subset \; \mathbb{R}^3 \to \mathbb{R},$$ moving along a common velocity vector $u\colon [0,T]\times\Omega\to \mathbb{R}^3$, where $\Omega\subset\mathbb{R}^3$ is a bounded, open set with $C^{2,\theta}$ boundary, $0<\theta\leq 1$.  We assume that the motion is slow,
therefore the momentum equation is given by the Stokes equation.
The pressure $p$ depends on the vector of the densities $\vec\varrho=(\varrho_1,\dots, \varrho_N)$.
The interactions between the particular components are described by the mass production rates $\omega_1,\dots,\omega_N\colon\mathbb{R}^N_+\to\mathbb{R}$, $\omega_1,\dots,\omega_N\in C^1(\overline{\mathbb{R}_+^N})$ and the mass diffusion is described by fluxes $F_i$. 

The full system, incorporating all these phenomena reads
\begin{equation}\label{diff_react}
    \begin{aligned}
    \partial_t\varrho_{i} + \ddiv(\varrho_{i} u) -\ddiv F_i &= \omega_{i}(\vec\varrho\,), \quad i=1,\dots,N , & \mbox{in \ }\Omega \times (0,T),\\
   -\mu\Delta u -\nabla((\mu+\lambda)\ddiv u) + \nabla p(\vec\varrho\,) &= 0, & \mbox{in \ } \Omega \times (0,T),
    \end{aligned}
\end{equation}
where we assume that $\mu$, $\lambda$ are constants such that $\mu>0$ and $\lambda + \frac{2}{3}\mu >0$.
\begin{rem}
    Note that in the system (\ref{diff_react}), one should also consider the force terms $\varrho f+ g$ on the right hand side of the momentum equation. To avoid unnecessary complications, we set $f=g=0$, however our result can be modified to include nonzero vector fields $f$ and $g$ with sufficiently high integrability.
\end{rem}


The system is considered with the slip boundary conditions for $u$ and the Neumann type constrain for the fluxes, namely on $\partial\Omega \times (0,T)$ 
\begin{equation}\label{slip}
    \begin{aligned}
    u\cdot n &= 0  & \mbox{at \ } \partial \Omega \times (0,T),\\
    n\cdot \mathbb{T}(u,p)\cdot\tau_k + fu\cdot\tau_k &= 0, \quad k=1,\dots,2, & \mbox{at \ } \partial \Omega
    \times (0,T),\\
    \quad F_i\cdot n & = 0, \quad i=1,...,N,& \mbox{at \ } \partial \Omega \times (0,T),
    \end{aligned}
\end{equation}
where $n$ and $\tau_k$ are the normal and tangent vectors respectively, $f> 0$ is constant and describes the friction, and the stress tensor $\mathbb{T}$ is given by
\[ \mathbb{T}(u,p(\vec \varrho\,))=2\mu\mathbb{D}u+(\lambda\ddiv u-p(\vec \varrho\,))\mathbb{I}. \]
The system is finally supplemented by the set of initial data for the species' densities
\[ {\varrho_{i}}_{|_{t=0}} = \varrho_{0,i}. 
\]

The core of the system is the constitutive relation giving us the form of the pressure. We assume that the total pressure $p$ is in the form
\begin{equation}\label{pressure} p(\vec\varrho\,)=\sum_{i=1}^N p_i(\varrho_i), \mbox{ \ with \ }
p_i(\varrho_i\,)=\frac{1}{m_i}\varrho_i^{\gamma_i}, \end{equation} 
where $m_i$'s correspond to molar masses of the components and $\gamma_i>1$. 
The diffusion fluxes are depending on the pressure. We assume that the fluxes have the form 
\begin{equation}\label{flux}
 F_i = \nabla p_i -\frac{\varrho_i}{\varrho}\nabla p. 
 \end{equation}
This formula was introduced in Chapter 2.5 in \cite{Gi}, in the case of linear pressure. It  guarantees the fundamental feature of the fluxes, namely that 
$\sum_{i=1}^N F_i=0$. It also provides non-negativity of densities $\varrho_i$, which shall be explained later in the paper.
%
%
%
Regarding the reaction terms,
we  focus on {\it the irreversible reaction} of the form
 \begin{equation}\label{irreversible} A_1 + ...+ A_K \rightarrow C_1+ ... + C_L. 
 \end{equation}
We impose the conditions on $\omega_i$ so that the densities remain non-negative for non-negative initial densities, and that the total mass of the system is preserved. To obtain the first property, we have to assume
\begin{equation}\label{omega1} \omega_i(\vec\varrho\,)\geq 0 \quad \text{for} \quad \varrho_i=0 \quad \text{and} \quad \varrho_j\geq 0, \; j\neq i. \end{equation}
Moreover, the conservation of the total mass is satisfied if 
\begin{equation}\label{omega2}
\sum_{i=1}^N\omega_i(\vec\varrho\,)=0. \end{equation}
Condition (\ref{omega2}) is the basic rule which saves us the conservation of the total mass.
This, together with the fact that $\sum_{i=1}^N F_i=0$, ensures that the sum of the densities $\varrho=\sum_{i=1}^N\varrho_i$ satisfies the continuity equation
\[ \partial_t\varrho+\ddiv(\varrho u) = 0 \]
and as a consequence the total mass $\int_\Omega \varrho \; \dd x$ remains constant.

\subsection{Main results}

The problem of existence of weak solutions to system involving degenerate diffusion remains up to now not fully resolved.
In the literature we find some positive answers to similar questions, but for slightly or significantly different systems. In \cite{Za_1,zatorska2013}
the author was able to control the fluxes $\bar F_i$ in terms of gradients of species, but in that framework 
the densities have $L^2$-integrable gradients. It is a consequence of a very special dependence of viscous coefficients on the density. Here, there is no such possibility. In \cite{degenerate-parabolic}, the authors construct the weak solutions to a system of continuity equations involving chemical reactions with a given velocity. In that case, they overcome the lack of regularity for the components by assuming that the total density is in $L^2(0,T;H^1)$, which together with the linear pressure provides the estimates on the gradients of the particular species. However, we are not able to improve the regularity of the total density naturally from the equation.



Because of that, we need to include additional assumptions on our solution. First, let us present the intermediate result, concerning the situation where the fluxes $F_i$ contain a small  elliptic regularization. We, then, are able to show:

\begin{tw}\label{appr_th}
Let $\varepsilon>0$.
    Assume that $p_i(\varrho_i)=\frac{1}{m_i}\varrho_i^{\gamma_i}$, where the exponents $\gamma_i$ satisfy the relation
    \begin{equation}\label{gammas} 2\gamma_{\max} < 3\gamma_{\min} - \gamma_S+1 \end{equation}
    for $\gamma_{\max}$ and $\gamma_{\min}$ being the maximum and minimum of $\{\gamma_1,\dots,\gamma_N\}$ respectively, and $\gamma_S=\max_{j\in S}\gamma_j$ with $S$ denoting the components on the right hand side of (\ref{irreversible}). Let $F_i$ be given by (\ref{flux}). Then, for any $T>0$, there exists at least one weak solution $(\varrho_1,\ldots,\varrho_N,u)$ to the system
    \begin{equation}\label{approx}
    \begin{aligned}
    \partial_t\varrho_{i} + \ddiv(\varrho_{i} u) -\ddiv F_{i} &= \omega_{i}(\vec\varrho\,) + \varepsilon\Delta\varrho_i, \quad i=1,\dots,N ,\\
   -\mu\Delta u -\nabla((\mu+\lambda)\ddiv u) + \nabla p(\vec\varrho\,) &= 0,
    \end{aligned}
\end{equation}
in $[0,T]\times\Omega$ with 
\[ {\varrho_i}_{|_{t=0}} = \varrho_{0,i,\varepsilon}\in C^\infty(\Omega), \quad \varrho_{0,i,\varepsilon}\to \varrho_{0,i} \quad \text{in} \quad L^2(\Omega), \]
the Neumann boundary condition $\frac{\partial\varrho_i}{\partial n}=0$ on $\partial\Omega$ 
and the slip boundary conditions (\ref{slip}) for $u$. Moreover, the obtained solution satisfies
\[ \varrho_i \geq 0, \quad i=1,\ldots,N, \]
\[ \|\varrho_i\|_{L^\infty((0,T)\times\Omega)}, \left\|\frac{1}{\sqrt{\varrho_i}}F_i\right\|_{L^2((0,T)\times\Omega)}, \sqrt{\varepsilon}\|\varrho_i\|_{L^2(0,T;H^1)} \leq C, \quad i=1,\ldots,N \]
and 
\[ \|u\|_{L^2(0,T;H^1)}, \|\nabla u\|_{L^\infty(0,T;BMO)}, \|\ddiv u\|_{L^\infty([0,T]\times\Omega)} \leq C, \]
where the constant $C$ does not depend on $\varepsilon$ or $T$.
\end{tw}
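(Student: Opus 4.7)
The plan is to build the solution by iterating between the two subsystems of (\ref{approx}). For a given $\vec\varrho\in L^\infty$, the momentum equation is a linear elliptic Stokes system with slip boundary data, uniquely solvable for $u$. For a given $u$, each species equation is a nondegenerate linear parabolic problem (thanks to the regularization $\varepsilon\Delta\varrho_i$) with Neumann boundary data, so it admits a unique weak solution in $L^2(0,T;H^1)\cap L^\infty(0,T;L^2)$. Schauder's fixed-point theorem on a suitable ball in a Bochner space then closes the loop, provided the a priori bounds below are established.

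\textbf{Stokes-side bounds.} Applying $\ddiv$ to the momentum equation shows that the effective viscous flux $H := (2\mu+\lambda)\ddiv u - p(\vec\varrho)$ is harmonic in $\Omega$. Combined with the slip boundary data and elliptic regularity for the compressible Stokes operator, this yields $\|\nabla u\|_{L^\infty(0,T;BMO)} + \|\ddiv u\|_{L^\infty([0,T]\times\Omega)} \le C\|p(\vec\varrho)\|_{L^\infty}$, uniformly in $\varepsilon$ and $T$. Testing the momentum equation by $u$ and using the friction boundary term $f|u\cdot\tau|^2$ gives $\|u\|_{L^2(0,T;H^1)} \le C\|p(\vec\varrho)\|_{L^2}$.

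\textbf{Density bounds and flux estimate.} Non-negativity of each $\varrho_i$ follows from (\ref{omega1}) and the form of $F_i$, which reduces to the harmless $\nabla p_i$ along $\{\varrho_i = 0\}$, producing a degenerate parabolic operator that preserves the sign. Summing the species equations and using $\sum_i F_i = 0$ and $\sum_i \omega_i = 0$ gives the transport-diffusion equation $\partial_t\varrho + \ddiv(\varrho u) = \varepsilon\Delta\varrho$ for the total density $\varrho=\sum_i\varrho_i$. Evaluating at an interior maximum $x_*$ of $\varrho(\cdot, t)$, one has $\partial_t\varrho(x_*)\le -\varrho(x_*)\ddiv u(x_*)$; since $H$ is harmonic with boundary data controlled in $L^\infty$, the quantity $\ddiv u(x_*) = (p(\vec\varrho)(x_*)+H(x_*))/(2\mu+\lambda)$ is strictly positive once $\varrho(x_*)$ is sufficiently large, forcing $\varrho(x_*)$ to decrease. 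This gives an invariant region for $\|\varrho\|_\infty$ uniformly in $T$ and $\varepsilon$, hence for each $\varrho_i\le\varrho$. The flux bound $\|F_i/\sqrt{\varrho_i}\|_{L^2}$ follows from testing the $i$-th equation with a chemical-potential-like weight of the form $\gamma_i\varrho_i^{\gamma_i-1}/[(\gamma_i-1)m_i]$; the cross-terms produced by $(\varrho_i/\varrho)\nabla p$ are absorbed using assumption (\ref{gammas}) together with the $L^\infty$ bound on $\vec\varrho$ just obtained.

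\textbf{Compactness and main obstacle.} For $\varepsilon>0$ fixed, the bound $\sqrt{\varepsilon}\|\varrho_i\|_{L^2(0,T;H^1)}\le C$ together with the time regularity extracted from the species equations yields Aubin-Lions compactness of $\vec\varrho$ in $L^p$ spaces, which allows passage to the limit in the nonlinear terms $p(\vec\varrho)$ and $\omega_i(\vec\varrho)$ and closes the Schauder iteration. I expect the main obstacle to be the $T$-independent closure of the $L^\infty$ estimate on $\varrho$: one must justify that the boundary trace of the harmonic $H$ produced by the slip boundary conditions cannot conspire to make $\ddiv u(x_*)$ negative at the maximum of $\varrho$, and that the BMO-to-$L^\infty$ step for $\ddiv u$ is genuinely independent of $\varepsilon$ and $T$. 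This requires detailed use of the Stokes operator with slip conditions and of the maximum principle for $H$ in tandem with the scalar transport-diffusion equation for the total density.
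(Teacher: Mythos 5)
Your overall strategy (effective viscous flux, positivity of $\ddiv u$ where the density is near its maximum, energy estimate for $\|F_i/\sqrt{\varrho_i}\|_{L^2}$) matches the paper's, but the step that actually makes the $L^\infty$ bound close is missing, and the role of hypothesis (\ref{gammas}) is misplaced. Writing $(2\mu+\lambda)\ddiv u = p(\vec\varrho\,) + H$ with $H$ harmonic, the whole difficulty is to show that $H$ (equivalently, the harmonic correction $d$ and the mean value of $p$) grows like $\|\varrho\|_{L^\infty}^{\tilde\gamma}$ with $\tilde\gamma<\gamma_{\min}$, so that at a point with $\varrho(t,x)\approx\|\varrho\|_{L^\infty}$ the term $p(\vec\varrho\,)\gtrsim \|\varrho\|_{L^\infty}^{\gamma_{\min}}$ dominates. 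Your bound ``$\|\ddiv u\|_{L^\infty}\le C\|p(\vec\varrho\,)\|_{L^\infty}$'' is circular here, and controlling the harmonic $H$ by its boundary trace gives again something of order $\|\varrho\|_{L^\infty}^{\gamma_{\max}}$, which does not beat $\|\varrho\|_{L^\infty}^{\gamma_{\min}}$. The paper instead estimates $d$ through the Helmholtz splitting and the vorticity problem for the slip conditions, obtaining $\|d\|_{L^\infty}\le C\|u\|_{L^\infty(0,T;W^{1,p})}$ for some $p>3$, and then bounds $\|p\|_{L^\infty(0,T;L^p)}$ by interpolating between $\|p\|_{L^\infty(0,T;L^1)}\le C+C\|\varrho\|_{L^\infty}^{\gamma_S-1}$ (which comes from the energy estimate, using the sign structure of the irreversible reaction and mass conservation to control $\iint\omega_S$) and the sup norm; it is exactly here that (\ref{gammas}) enters, giving $\tilde\gamma=\gamma_{\max}-\frac{\gamma_{\max}-\gamma_S+1}{p}<\gamma_{\min}$. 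In your proposal (\ref{gammas}) is invoked to ``absorb cross-terms'' in the flux estimate, but those cross-terms cancel identically because $\sum_i F_i=0$ (identity (\ref{F_i})); no structural condition is needed there. Also, your pointwise argument at an interior maximum of $\varrho$ presupposes smoothness; at the level of weak solutions the paper runs the same idea via the test function $(\varrho-k)_+$ with $k>\|\varrho\|_{L^\infty}-1$ and a contradiction.

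The construction itself is also glossed over in a way that would not survive scrutiny: for fixed $u$ the species subsystem is not a ``nondegenerate linear parabolic problem'' — the fluxes $F_i=\nabla p_i-\frac{\varrho_i}{\varrho}\nabla p$ couple all components, are quasilinear, and have coefficients $\varrho_i/\varrho$ that are singular near vacuum, while the reaction terms are nonlinear. This is why the paper inserts an additional $\delta$-level (truncation of $\varrho_i$ in the fluxes and reaction terms, regularized pressure, and the penalization $\delta\tilde\varrho^{\beta-2}\varrho_i$ with $\beta=6$ to gain integrability), runs a Galerkin scheme whose ODE right-hand side is locally Lipschitz precisely thanks to these truncations, proves nonnegativity at the weak level by the renormalized test function $\varrho_i^-/(\varrho_i^-+h)$ (your sign-preservation claim needs such an argument, since the pointwise reasoning is unavailable), establishes the $\varepsilon,\delta$-independent $L^\infty$ bound, and only then removes $\delta$. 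Without some substitute for these regularizations, your Schauder iteration does not have a well-defined, compact solution map.
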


Note that the above result delivers well defined objects in system (\ref{approx}). The gradients of the species are in $L^2$ and thus $F_i$'s given by (\ref{flux}) make sense. The proof of Theorem \ref{appr_th} is based on the classical techniques. First, we consider a suitable regularization, which makes our fluxes less degenerate, and densities better integrable. Then starting from the Galerkin methods we prove existence of each step of approximation scheme, obtaining the solutions to (\ref{approx}) finally. The statement of Theorem \ref{appr_th} belongs to the standard theory of weak solutions to equations for compressible fluids. 

One of the main difficulties appearing with passing to the limit with $\varepsilon$ in (\ref{approx}) is possible lack of strict positivity of the particular densities. It turns out that even though we still cannot control the gradients of the densities, assuming that all components are bounded away from zero we can rewrite the fluxes $F_i$ in terms of new variables, which are already bounded in $L^2(0,T;H^1)$. We summarize this result below:
\begin{tw}\label{positive_th}
    Let $p_i$ and $\gamma_i$ be as in Theorem \ref{appr_th}. Let $(\vec\varrho_\varepsilon,u_\varepsilon)$ be a sequence of solutions to (\ref{approx}), such that for every $i=1,\dots,N$
    \[ \varrho_{i,\varepsilon} >c \mbox{ \ \ in \ \ }  [0,T]\times \Omega , \] 
    for some $c>0$. Then $(\vec\varrho_\varepsilon,u_\varepsilon)$ converges to a solution to (\ref{diff_react}), with 
    \[ F_i = \nabla p_i - \frac{\varrho_i}{\varrho}\nabla p. \]
\end{tw}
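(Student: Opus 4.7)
The plan is to extract a subsequence along which all quantities converge weakly, to upgrade the weak convergence of the densities to strong via the positivity assumption, and then to pass to the limit in all nonlinear terms of (\ref{approx}). From the uniform bounds in Theorem \ref{appr_th}, standard compactness delivers (up to extraction) $\varrho_{i,\varepsilon}\rightharpoonup^*\varrho_i$ in $L^\infty$, $u_\varepsilon\rightharpoonup u$ in $L^2(0,T;H^1)$, $\ddiv u_\varepsilon\rightharpoonup^*\ddiv u$ in $L^\infty$, $F_{i,\varepsilon}\rightharpoonup \overline{F_i}$ in $L^2$ (using $\|F_{i,\varepsilon}\|_{L^2}\le C$, which follows from $\|F_{i,\varepsilon}/\sqrt{\varrho_{i,\varepsilon}}\|_{L^2}\le C$ combined with $\varrho_{i,\varepsilon}\ge c>0$), and $\varepsilon\nabla\varrho_{i,\varepsilon}\to 0$ in $L^2$. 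The momentum equation being linear in $u$ and $p$ passes to the limit in $\mathcal D'$ as soon as $p(\vec\varrho_\varepsilon)$ is identified, so everything reduces to the density equations and fluxes.

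The crux is strong compactness of the densities, needed to identify $\overline{F_i}$ with $\nabla p_i - (\varrho_i/\varrho)\nabla p$ and to pass to the limit in $p(\vec\varrho_\varepsilon)$ and $\omega_i(\vec\varrho_\varepsilon)$. Since $\varrho_{i,\varepsilon}\in[c,C]$, the mass fractions $Y_{i,\varepsilon}=\varrho_{i,\varepsilon}/\varrho_\varepsilon$ stay in a compact subinterval of $(0,1)$, and $\varrho_i\mapsto p_i=\varrho_i^{\gamma_i}/m_i$ is bilipschitz on the effective range, so gradient bounds on $\varrho_{i,\varepsilon}$ and on $p_{i,\varepsilon}$ are interchangeable. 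Using the algebraic identity $\nabla p_{i,\varepsilon} = F_{i,\varepsilon} + Y_{i,\varepsilon}\nabla p_\varepsilon$, it suffices to control $\nabla p_\varepsilon$ uniformly in $L^2$. For this one invokes the effective viscous flux: taking the divergence of the momentum equation yields $\Delta\!\left((2\mu+\lambda)\ddiv u_\varepsilon - p_\varepsilon\right)=0$, so the combination $(2\mu+\lambda)\ddiv u_\varepsilon - p_\varepsilon$ is harmonic and thus smooth in the interior, with boundary behaviour fixed by the slip conditions and its $L^\infty$ norm controlled by the $L^\infty$ bounds on $p_\varepsilon$ and $\ddiv u_\varepsilon$. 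Plugging this back into the flux identity produces an $\varepsilon$-independent estimate $\|\varrho_{i,\varepsilon}\|_{L^2(0,T;H^1)}\le C$ on the new variables alluded to in the statement.

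With this gradient control and the time-derivative bound $\partial_t\varrho_{i,\varepsilon}\in L^2(0,T;H^{-1})$ (read off from (\ref{approx}), using that $\varrho_{i,\varepsilon}u_\varepsilon,\, F_{i,\varepsilon}\in L^2$, $\omega_i(\vec\varrho_\varepsilon)\in L^\infty$, and $\varepsilon\Delta\varrho_{i,\varepsilon}\to 0$ in $L^2(H^{-1})$), the Aubin--Lions lemma delivers $\varrho_{i,\varepsilon}\to\varrho_i$ strongly in $L^2(0,T;L^2(\Omega))$, and then by interpolation with the $L^\infty$ bound, in every $L^p((0,T)\times\Omega)$ with $p<\infty$. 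Strong convergence propagates through the nonlinearities $p(\vec\varrho_\varepsilon)\to p(\vec\varrho)$, $\omega_i(\vec\varrho_\varepsilon)\to \omega_i(\vec\varrho)$, $\varrho_{i,\varepsilon}u_\varepsilon\to \varrho_i u$, $Y_{i,\varepsilon}\to Y_i$, and combined with the weak convergence of $\nabla p_\varepsilon$ it identifies $\overline{F_i} = \nabla p_i - (\varrho_i/\varrho)\nabla p$. The regularizing term $\varepsilon\Delta\varrho_{i,\varepsilon}$ vanishes in $\mathcal D'$ since $\|\varepsilon\nabla\varrho_{i,\varepsilon}\|_{L^2}=O(\sqrt\varepsilon)$, and the slip and Neumann boundary conditions survive under trace continuity.

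The main obstacle is exactly the $L^2(H^1)$ bound produced in the second paragraph. The flux estimate controls $F_i/\sqrt{\varrho_i}$, not $\nabla\varrho_i$; summing the density equations washes out all diffusion because $\sum_i F_i=0$; and the parabolic regularization only yields $\|\nabla\varrho_{i,\varepsilon}\|_{L^2}=O(1/\sqrt\varepsilon)$. The route through the harmonicity of the effective viscous flux, converting $L^\infty$ control of $p_\varepsilon$ and $\ddiv u_\varepsilon$ into genuine $L^2$ gradient control, is where the hypothesis $\varrho_{i,\varepsilon}>c$ is indispensable, and where the interplay between the Stokes system and the cross-diffusive structure of the fluxes must be handled with care.
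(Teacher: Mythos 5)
There is a genuine gap at the step you yourself identify as the crux: the claimed $\varepsilon$-independent bound $\|\varrho_{i,\varepsilon}\|_{L^2(0,T;H^1)}\le C$ cannot be obtained from the effective viscous flux in the way you sketch. The relation $(2\mu+\lambda)\ddiv u_\varepsilon = p_\varepsilon - \frac{1}{|\Omega|}\int_\Omega p_\varepsilon\,\dd x + d$ with $d$ harmonic gives $\nabla p_\varepsilon = (2\mu+\lambda)\nabla\ddiv u_\varepsilon - \nabla d$, so to control $\nabla p_\varepsilon$ in $L^2$ you would need $\nabla\ddiv u_\varepsilon$ in $L^2$; but by elliptic regularity for the Stokes system $\nabla\ddiv u_\varepsilon$ has exactly the regularity of $\nabla p_\varepsilon$, so the argument is circular. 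The available information ($p_\varepsilon,\ddiv u_\varepsilon\in L^\infty$, $\nabla u_\varepsilon\in BMO$) gives no gradient control whatsoever on $p_\varepsilon$. Moreover, the identity $\nabla p_{i,\varepsilon}=F_{i,\varepsilon}+Y_{i,\varepsilon}\nabla p_\varepsilon$ cannot be closed: summing it over $i$ uses $\sum_i F_{i,\varepsilon}=0$ and $\sum_i Y_{i,\varepsilon}=1$ and returns a tautology, so the flux bounds plus positivity yield no estimate on $\nabla p_\varepsilon$ or $\nabla\varrho_\varepsilon$. Consequently your Aubin--Lions step for $\varepsilon\to 0$ has no uniform spatial bound to rest on (the parabolic term only gives $\sqrt\varepsilon\,\|\nabla\varrho_{i,\varepsilon}\|_{L^2}\le C$), and the identification of the limits of $p(\vec\varrho_\varepsilon)$, $\omega_i(\vec\varrho_\varepsilon)$ and $F_{i,\varepsilon}$ is not justified.

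What the positivity assumption actually buys is control of the $N-1$ combinations $q_j$ (differences of the quantities $\frac{\gamma_j}{\gamma_j-1}\frac{1}{m_j}\varrho_j^{\gamma_j-1}$): writing $F_i=\sum_{j=1}^{N-1}b_{i,j}\nabla q_j$, the matrix $B=(b_{i,j})$ has $\det B=\varrho_1\cdots\varrho_N/\varrho\ge c^N/C>0$, so the $L^2$ bound on the fluxes transfers to $\|\nabla q_j\|_{L^2}\le C$ --- but it never gives a gradient bound on the total density $\varrho_\varepsilon$. The paper therefore proves strong compactness of $\varrho_\varepsilon$ without any gradient estimate, via the Bresch--Jabin kernel criterion: one propagates the quantity $\mathcal{R}_h(t)=\|K_h\|_1^{-1}\iint K_h(x-y)|\varrho_\varepsilon(t,x)-\varrho_\varepsilon(t,y)|^2\dd x\dd y$ in time, using the effective viscous flux to replace $\ddiv u_\varepsilon$, the $H^1$ bounds on $d$ and on the $q_j$, the $BMO$ bound on $\nabla u_\varepsilon$ together with a logarithmic interpolation inequality, and a log-Gronwall argument. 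Finally, each individual density is recovered through the map $G(\vec\varrho\,)=(\vec q,\varrho)$, shown to be a global diffeomorphism on the positive cone (Hadamard's theorem), whose Lipschitz inverse converts compactness of $(\vec q_\varepsilon,\varrho_\varepsilon)$ into compactness of every $\varrho_{i,\varepsilon}$. These two ingredients --- the invertible change of variables and the kernel compactness argument --- are precisely what replaces the $L^2(H^1)$ bound your proposal assumes.
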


The proof is based on the fact that even though the fluxes $F_i$ form a degenerate system, we can rewrite them as
\begin{equation}\label{fluxes2}
F_i = \sum_{j=1}^{N-1}b_{i,j}\nabla q_j, \end{equation}
where $\vec q=(q_1,\dots,q_N)$ is a certain projection of $\left(\frac{\gamma_1}{\gamma_1-1}\frac{p_1}{\varrho_1},\dots,\frac{\gamma_N}{\gamma_N-1}\frac{p_N}{\varrho_N}\right)$ on the $N-1$-dimensional space orthogonal to $(1,\dots,1)$ (in particular $q_N=-\sum_{j=1}^{N-1}q_j$). It is shown that in the case when $\varrho_1,\dots,\varrho_N\neq 0$, the coefficients $b_{i,j}$ from the relation (\ref{fluxes2}) form an invertible matrix, which in turn provides an $L^2$ bound on $\nabla q_i$.
Similar approach, with a different definition of the fluxes, was considered for example in \cite{bulicek-pokorny,DDGG}. \\

It is worth noting that our method also allows the system to contain both diffusive and non-diffusive components. Assuming $N_1$ is the number of diffusing elements, the full system reads
\begin{equation}\label{diff_nondiff}
    \begin{aligned}
    \partial_t\varrho_{i} + \ddiv(\varrho_{i} u) -\ddiv F_i &= \omega_{i}(\vec\varrho\,), \quad i=1,\dots,N_1, \\
    \partial_t\varrho_j + \ddiv(\varrho_j u) &= \omega_j(\vec\varrho\,), \quad j=N_1+1,\dots,N, \\
   -\mu\Delta u -\nabla((\mu+\lambda)\ddiv u) + \nabla p(\vec\varrho\,) &= 0.
    \end{aligned}
\end{equation}

Let us now briefly explain the outline of the rest of the paper and discuss the emerging obstacles:
\begin{itemize}
    \item In Sections 2--\ref{limit_sect}, we consider the case when diffusion occurs for all the components and the most general diffusive -- non-diffusive case is shortly commented in Section 5.
    \item In Section 2, we present the a priori estimates for the solutions. In particular, we show that the densities $\varrho_i$ are in $L^\infty((0,T)\times\Omega)$. The used strategy allows us to derive the required estimates in the case where the exponents $\gamma_i$ satisfy the relation (\ref{gammas}).
    \item Section 3 is devoted to the construction of solutions to equation (\ref{approx}) and in consequence ends the proof of Theorem \ref{appr_th}.
    The next step towards the proof of Theorem \ref{positive_th} is the limit passage with $\varepsilon\to 0$. This final step is done Section \ref{limit_sect}.

    
\end{itemize}

\subsection{Bibliographical remarks}
The modelling of multicomponent flows dates back to XIXth century and the works of A. Fick, who studied the connection between the mass flux and molar concentrations. The relation called nowadays the Fick's law states
\begin{equation}\label{Fick}
J_i^{\rm mol}= - D_i \nabla c_i,
\end{equation}
where $J_i^{\rm mol}$ is the molar mass flux (corresponding to $F_i$ in our setting), $c_i$ denotes the molar concentration of a constituent $A_i$, and $D_i$ are diffusion coefficients, $i\in\{1,\dots,N\}$.

It was realized later on that such relation is less realistic, which led to introducing of the general setting of the Theory of irreversible processes. We  refer to De Groot and Mazur \cite{DG-M}, where the deficiencies of Fick's law are described together with a new law - the Fick-Onsager form of diffusion fluxes. The diffusion coefficients $D_i$ are replaced by a matrix phenomenological coefficients $L_{ij}$ called Osanger coefficients for diffusion. Another approach is the Maxwell-Stefan equations - in the so-called diffusive approximation, which ignore the acceleration of the relative motion, see e. g. Dreyer, Druet, Gajewski and  Guhlke \cite{DDGG}. For more details concerning modelling, we refer to the article by Bothe and Druet \cite{BD}.

Maxwell-Stefan type models  were studied from the engineering point of view but there are only a few results concerning the mathematical theory. Recently such problems attract a lot of attention, see works of \cite{B,BGS,JS}. The mass-based Maxwell-Stefan approach to one-phase multi-component reactive mixture is analyzed \cite{HMPW}. It is shown the local well-posedness in $L^p$ setting and generate a local semiflow on its natural state space. A self-contained introduction to this approach can be find in \cite{J}.
Analysis of cross-diffusion systems for fluid mixtures driven by a pressure gradient  was analysis for weak and strong solution by Druet and J\"ungel \cite{DJ}.

Many mathematical results have been proved in direction of multi-component diffusion systems, where the components share the same velocity and the Maxwell-Stefan system is coupled with the Navier-Stokes equations.
The model for the compressible chemically reacting mixtures,  developed by Giovangigli in \cite{Gi}, was considered in the context of the Fick's law in \cite{FPT}; for non-diagonal mobility matrix see also \cite{Za_1}. 
    The paper \cite{MPZ_SIMA} includes the case when the mean viscosity depends on the total density and fulfils the Bresch -- Desjardins relation.   The steady problems (with non-diagonal mobility matrix) were studied in \cite{Zatorska_2011,GPZ,PiPo1}. More complex situation is considered in \cite{DDGG}. Strong solution of such type of model was investigated by Piasecki, Shibata, Zatorska \cite{PSZ1,PSZ2}.
Let us also mention  the recent work of Druet concerning the singular limit for multicomponent models, \cite{D}.


The above results in principle concern the situation, where the chemical reaction is reversible. Concerning the irreversible reaction, from the mathematical point of view not much is known. A fast irreversible reaction of type (\ref{abc}) was investigated by Bothe and Pierre in the case of reaction-diffusion system \cite{BP}, where it appears between species $A$ and $B$ with similar concentrations. Purely numerical and experimental results, involving irreversible reactions, were obtained for example in \cite{bothe-kroeger-warnecke,bubbles1,bubbles2}. 

\section{A priori estimates}\label{a_priori_sect}

Now we proceed with a priori estimates. 
We assume for simplicity that $N=3$ and that we deal with a reaction of the type
 \begin{equation}\label{abc} A + B \rightarrow C, \end{equation}
 however from the proof it is clear that the reasoning can be adjusted to a single irreversible reaction with arbitrary number of components. For the one-way reaction, the production rates have fixed signs. The ones corresponding to reagents are negative, whereas the ones corresponding to products are positive. In the special case (\ref{abc}), $\omega_i = -\alpha_i\varrho_1\varrho_2$ for $i=1,2$ and $\omega_3 = (\alpha_1+\alpha_2)\varrho_1\varrho_2$, where $\alpha_i>0$ depend on the reaction rate and molecular weights of $A$ and $B$ respectively. The above example can be easily generalized to the case
 \[ A_1 + ...+ A_K \rightarrow C_1+ ... + C_L, \]
 with suitable definitions of $\omega_i$'s.

Throughout this section we will assume that $(\vec\varrho,u)$ is a sufficiently smooth solution to (\ref{diff_react}). We show that
 \begin{lem}\label{oszac1}
Assuming $\varrho_{0,i}\geq 0$ for $i=1,\dots,N$ and that $\Omega$ is sufficiently smooth (e. g. $\partial\Omega\in C^{2,\theta}$), then $(\vec\varrho,u)$ satisfies
 \[ \rho_i \geq 0, \qquad \int_{\Omega} \rho (t)\;\dd x = \int_\Omega \varrho_0 \;\dd x,\]
 \[ \sum_{i=1}^N\|\varrho_i\|_{L^\infty([0,T]\times\Omega)} + \sum_{i=1}^N\left\|\frac{1}{\sqrt{\varrho_i}}F_i\right\|_{L^2([0,T]\times\Omega)} + \|\ddiv u\|_{L^\infty([0,T]\times\Omega)} \leq C, \]
 where the constant $C$ depends only on $\|\vec\varrho_0\|_{L^\infty(\Omega)}$.
 \end{lem}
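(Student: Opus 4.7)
The plan is to prove the three assertions of the lemma in turn, beginning with the ones that follow from standard arguments, and then attacking the uniform $L^\infty$ bound, which I view as the substantive step.

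\textbf{Non-negativity and mass conservation.} Non-negativity of each $\varrho_i$ follows from the parabolic maximum principle: when $\varrho_i$ touches zero, both ingredients of $F_i=\nabla p_i-(\varrho_i/\varrho)\nabla p$ degenerate ($p_i'(\varrho_i)=\gamma_i\varrho_i^{\gamma_i-1}/m_i\to 0$ since $\gamma_i>1$, and the prefactor $\varrho_i/\varrho$ vanishes), while $\omega_i\geq 0$ there by (\ref{omega1}), so $\{\varrho_i\geq 0\}$ is invariant. Summing the continuity equations and using $\sum_i F_i=0$ together with $\sum_i\omega_i=0$ reduces them to $\partial_t\varrho + \ddiv(\varrho u)=0$; integrating over $\Omega$ with $u\cdot n=0$ on $\partial\Omega$ gives $\tfrac{\dd}{\dd t}\int_\Omega\varrho\,\dd x=0$.

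\textbf{$L^\infty$ bound on $\varrho$ and $\ddiv u$.} This is the core of the proof and the main obstacle. I would exploit the effective-viscous-flux structure: applying $\ddiv$ to the Stokes equation shows that $G:=(2\mu+\lambda)\ddiv u - p(\vec\varrho)$ is harmonic in $\Omega$, while slip boundary conditions and $W^{2,q}$-regularity for the compressible Stokes operator on a $C^{2,\theta}$ domain yield, for every large $q$,
\[\|u\|_{W^{2,q}(\Omega)} \leq C_q\|p(\vec\varrho)\|_{L^q(\Omega)}, \qquad \|G\|_{L^\infty(\Omega)} + \|\ddiv u\|_{L^\infty(\Omega)} \leq C\|p(\vec\varrho)\|_{L^\infty(\Omega)}.\]
Writing $\ddiv u = (p(\vec\varrho)-G)/(2\mu+\lambda)$ and substituting into $\partial_t\varrho + u\cdot\nabla\varrho = -\varrho\,\ddiv u$, I track $M(t):=\max_x\varrho(x,t)$ along the moving maximum $x^*(t)$:
\[\frac{\dd}{\dd t}M(t) \leq -\frac{1}{2\mu+\lambda}\,\varrho\,\bigl(p(\vec\varrho)-G\bigr)(x^*(t),t).\]
At $x^*$ at least one $\varrho_i\geq M/N$, hence $p(\vec\varrho)(x^*)\geq cM^{\gamma_{\min}}$; the structural assumption (\ref{gammas}) relating $\gamma_{\min}$, $\gamma_{\max}$ and $\gamma_S$ is precisely what ensures that this pressure term dominates the harmonic correction $G$ and the reactive contribution at large $M$, producing a super-linear ODE inequality of the form $\tfrac{\dd}{\dd t}M\leq -cM^{1+\gamma_{\min}}$ once $M$ exceeds a threshold depending only on $\|\vec\varrho_0\|_{L^\infty}$. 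This gives $\|\varrho_i\|_{L^\infty}\leq\|\varrho\|_{L^\infty}\leq C$, and feeding this back into the elliptic inequality yields $\|\ddiv u\|_{L^\infty}\leq C$. The crux here is that the generic bound $\|G\|_{L^\infty}\leq C\|p\|_{L^\infty}$ is by itself too loose to close the inequality at the maximum; one really needs (\ref{gammas}) to make the super-linear pressure growth beat $G$ and the reaction source (whose worst scaling is of order $\gamma_S$).

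\textbf{Entropy estimate for the fluxes.} With the $L^\infty$ bounds in hand I introduce $h_i$ satisfying $h_i''(\varrho_i)=p_i'(\varrho_i)/\varrho_i$, so that $p_i = \varrho_i h_i'-h_i$. Testing the $i$-th continuity equation with $h_i'(\varrho_i)$ and integrating by parts using $F_i\cdot n=0$, then summing in $i$ and using $\sum_i F_i = 0$ together with (\ref{flux}), converts the flux contribution into
\[\int_\Omega\sum_i\frac{1}{\varrho_i}\nabla p_i\cdot F_i = \int_\Omega\sum_i\frac{|F_i|^2}{\varrho_i},\]
while the convective term delivers $\int_\Omega p(\vec\varrho)\,\ddiv u$. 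Testing the Stokes equation with $u$ and invoking Korn's inequality under the friction slip boundary conditions produces
\[2\mu\int_\Omega|\mathbb D u|^2 + \lambda\int_\Omega(\ddiv u)^2 + f\int_{\partial\Omega}|u|^2 = \int_\Omega p(\vec\varrho)\,\ddiv u,\]
and adding the two identities cancels the common $\int p\,\ddiv u$. After integration in time,
\[\int_\Omega\sum_i h_i(\varrho_i)(T) + \int_0^T\int_\Omega\sum_i\frac{|F_i|^2}{\varrho_i} \leq \int_\Omega\sum_i h_i(\varrho_{0,i}) + \int_0^T\int_\Omega\sum_i h_i'(\varrho_i)\omega_i(\vec\varrho),\]
and the right-hand side is controlled by the $L^\infty$ bound of the previous step and the polynomial form of the $\omega_i$, so a Gr\"onwall-type argument yields the claimed $L^2$-bound on $F_i/\sqrt{\varrho_i}$ and closes the lemma.
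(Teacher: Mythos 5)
Your Step on non-negativity and mass conservation is in the spirit of the paper (the paper argues by integrating over the negative set $\{\varrho_i<0\}$ rather than invoking a maximum principle, and notes the same structural degeneracy of $F_i$ at $\varrho_i=0$; strictly speaking the system is a degenerate cross-diffusion system, not a scalar parabolic equation, so "parabolic maximum principle" is a gesture, not a proof, but the idea is right). The genuine gap is in the $L^\infty$ step, which you yourself flag as the crux and then do not close. Your differential inequality for $M(t)=\max_x\varrho$ requires knowing that the "correction" $G$ (equivalently the harmonic part $d$ and the mean value $\frac{1}{|\Omega|}\int_\Omega p\,\dd x$ in the decomposition (\ref{d})) grows strictly slower than $M^{\gamma_{\min}}$. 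You admit that the generic bound $\|G\|_{L^\infty}\leq C\|p\|_{L^\infty}\sim\|\varrho\|_{L^\infty}^{\gamma_{\max}}$ is too loose, and then simply assert that (\ref{gammas}) makes the pressure dominate, without supplying the mechanism. In the paper this is exactly where the work is: (i) the energy estimate is derived \emph{before} the $L^\infty$ bound and, using the sign structure of the irreversible reaction ($\omega_1,\omega_2\leq 0$) plus $\int_0^T\int_\Omega\omega_3\,\dd x\,\dd t\leq C$ from mass conservation, gives $\sup_t\int_\Omega p\,\dd x\leq C+C\|\varrho\|_{L^\infty}^{\gamma_S-1}$; (ii) interpolation between $L^1$ and $L^\infty$ gives $\|p\|_{L^\infty(0,T;L^p)}\lesssim 1+\|\varrho\|_{L^\infty}^{\tilde\gamma}$ with $\tilde\gamma=\gamma_{\max}-\frac{\gamma_{\max}-\gamma_S+1}{p}$; (iii) the Stokes elliptic estimates with slip conditions and the vorticity/Helmholtz splitting give $\|d\|_{L^\infty}\leq C\|d\|_{W^{1,p}}\leq C\|u\|_{W^{1,p}}\lesssim 1+\|\varrho\|_{L^\infty}^{\tilde\gamma}$ for some $p>3$; and (\ref{gammas}) is used precisely to ensure $\tilde\gamma<\gamma_{\min}$ and $\gamma_S-1<\gamma_{\min}$, which breaks the circularity. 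Without (i)--(iii) your threshold "depending only on $\|\vec\varrho_0\|_{L^\infty}$" is unsubstantiated: as stated it depends on $\|G\|_{L^\infty}$ and on the spatial average of $p$, i.e.\ on $\|\varrho\|_{L^\infty}$ itself, which is what you are trying to bound.

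Two further points. First, your ordering makes the gap structural: you postpone the energy/entropy estimate until after the $L^\infty$ bound and control its right-hand side \emph{by} that bound, but the paper needs the energy estimate first, since it is the only source of the quantitative $\gamma_S-1$ growth of the mean pressure that feeds into the $\ddiv u$ sign argument (the paper then concludes via testing with $(\varrho-k)_+$ and Lemma \ref{divu_positive}, a level-set contradiction argument rather than tracking a moving maximum, though your ODE variant could in principle work for smooth solutions once the quantitative inputs are available). Second, in your maximum ODE the "reactive contribution" should not appear directly: for the total density the reactions cancel ($\sum_i\omega_i=0$) and $\sum_i F_i=0$, so the reaction enters only indirectly, through the estimate of the average pressure — which again is exactly the missing energy estimate. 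The final entropy step you propose (testing with $h_i'(\varrho_i)$ and using the identity (\ref{F_i})) is essentially the paper's Step II and would indeed give the $\|F_i/\sqrt{\varrho_i}\|_{L^2}$ bound once the $L^\infty$ bound is in hand.
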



{\bf Proof.} The proof is split into four steps.
\smallskip

{\bf Step I.} 
First, let us prove the basic properties of the solution:
mass conservation and non-negativity of all densities. As the total density satisfies the continuity equation, after integrating over $\Omega$ we get
$\frac{\dd}{\dd t}\int_\Omega \varrho \;\dd x = 0$,
and thus $\displaystyle\int_\Omega \varrho(t,x)\dd x = \int_\Omega \varrho_0\;\dd x$ for all $t>0$.

We  now show that $ \varrho_i\geq 0$  for all $ i=1,\dots,N.$
For a fixed $i$, let  $\Omega_i^-(t)=\{x\in \Omega: \varrho_i(t,x)<0\}. $
Integrating the equation on $\varrho_i$ over $\Omega_i^-$, we get
\[ \int_{\Omega_i^-(t)}\partial_t\varrho_i\dd x = \int_{\partial\Omega_i^-(t)}\left(-\varrho_iu + \frac{\gamma_i}{m_i}\varrho_i^{\gamma_i-1}\nabla\varrho_i-\frac{\varrho_i}{\varrho}\nabla p\right)\cdot \boldsymbol{\mathfrak{n}}_i^-(t)\dd S + \int_{\Omega_i^-(t)}\omega_i \dd x, \]
where $\boldsymbol{\mathfrak{n}}_i^-(t)$ is the outer normal vector to $\Omega_i^-(t)$. To deal with the last term let us observe:

\begin{rem}\label{omega_rem}
    The assumption (\ref{omega1}) allows us to extend $\omega_i$ to a function $\tilde\omega_i$ on $\mathbb{R}^N$ in such a way that $\tilde\omega_i(\vec\varrho\,)\geq 0$ whenever $\varrho_i<0$ and $\tilde\omega_i$ is locally Lipschitz on $\mathbb{R}^N$. For example, define $\tilde\omega_i(\vec\varrho\,)=\omega_i(s(\vec\varrho\,))$ on the set $\{\vec\varrho\in\mathbb{R}^N: \varrho_i\geq 0\}$ and $\tilde\omega_i(\vec\varrho\,)=\max(0,\omega_i(s(\vec\varrho\,)))$ if $\varrho_i<0$, where $s(z_1,\dots,z_N)=(|z_1|,\dots, |z_N|)$. Since $\omega_i\geq 0$ if $\varrho_i=0$, the Lipschitz continuity of $\tilde\omega_i$ is preserved.
\end{rem}

Therefore, as by Remark \ref{omega_rem} $\omega_i\geq 0$ for $\varrho_i\leq 0$ and  ${\varrho_i}_{|_{\partial\Omega_i^-(t)}}=0$, we get that
$\displaystyle  \frac{\dd}{\dd t}\int_{\Omega}\varrho_i^-\dd x \leq 0,$
where $\varrho_i^-\geq 0$ is the negative part of $\varrho_i$ ($\varrho_i=\varrho_i^+-\varrho_i^-$). In consequence,
$$\int_\Omega \varrho_i^-(t,x)\dd x \leq \int_\Omega \varrho_i^-(0,x)\dd x = 0,$$
which yields $\varrho_i^-(t,\cdot)\equiv 0$. 

Note that we are able to perform the above calculations provided that the set $\Omega_i^-(t)$ is of class $C^1$. However, from the implicit function theorem and Sard's theorem \cite{sard} it follows that we can find $\delta_n>0$, $\delta_n\to 0$ with $n\to\infty$, such that $\Omega_{i,\delta_n}^-(t):=\{x\in\Omega: \varrho_i(t,x)<\delta_n\}$ have the desired regularity. Then the assertion follows by taking $n\to\infty$.


 

 \smallskip

 {\bf Step II.} In this part we construct the basic energy estimate, in terms of the pointwise bound on the density and the production rates $\omega_i$'s. The goal is to show that
 \begin{multline}\label{energy} \int_0^T\int_\Omega 
\mu|\mathbb{D} u|^2 +\lambda(\ddiv u)^2\;\dd x\dd t + \sup_{t\in [0,T]}\sum_{i=1}^N\frac{1}{\gamma_i-1}\int_\Omega p_i(\varrho_i)\dd x + \sum_{i=1}^N \int_0^T\int_\Omega \frac{1}{\varrho_i}|F_i|^2 \dd x\dd t \\
\leq \sum_{i=1}^N\frac{1}{\gamma_i-1}\int_\Omega p_i(\varrho_{0,i})\dd x + C\|\varrho\|_{L^\infty([0,T]\times\Omega)}^{\gamma_3-1}. \end{multline}
Note that in the general case (\ref{irreversible}), on the right hand side instead of $\gamma_3$ we need to put $\gamma_S$, which is the maximum of exponents corresponding to $C_1,\dots,C_L$.

 Testing the momentum equation by $u$, we get
 \[ \mu\int_\Omega |\mathbb{D} u|^2\dd x + \mu f\int_{\partial\Omega}|u|^2\dd S + \lambda\int_\Omega (\ddiv u)^2\dd x + \int_\Omega \nabla p\cdot u =0. \]
 To obtain the desired estimate, there is a  need to deal with the last term. We have
 \begin{multline*} \int_\Omega \nabla p\cdot u \;\dd x =  \frac{\dd}{\dd t}\sum_{i=1}^N\frac{1}{\gamma_i-1}\frac{1}{m_i}\int_\Omega \varrho_i^{\gamma_i} \;\dd x \\
 +\sum_{i=1}^N\frac{\gamma_i}{\gamma_i-1}\frac{1}{m_i}\int_\Omega\nabla\varrho_i^{\gamma_i-1}\cdot F_i\;\dd x -\sum_{i=1}^N\frac{\gamma_i}{\gamma_i-1}\frac{1}{m_i}\int_\Omega \varrho_i^{\gamma_i-1}\omega_i \;\dd x.  \end{multline*}
 The choice of the fluxes $F_i$ leads to the following important relation:
 \begin{equation}\label{F_i}
\begin{aligned} \sum_{i=1}^N  \frac{\gamma_i}{m_i}\nabla\varrho_i^{\gamma_i-1}\cdot F_i &=
\sum_{i=1}^N \frac{1}{\varrho_i}\left(\frac{1}{m_i}\nabla\varrho_i^{\gamma_i}-\frac{\varrho_i}{\varrho}\nabla p\right)\cdot F_i
= \sum_{i=1}^N \frac{1}{\varrho_i}|F_i|^2.
\end{aligned} \end{equation}
 Moreover, as $\omega_1$ and $\omega_2$ are nonpositive, we have
 \[ -2\sum_{i=1}^N\frac{\gamma_i}{m_i}\int_\Omega \varrho_i^{\gamma_i-1}\omega_i \;\dd x \geq -2\frac{\gamma_3}{m_3}\int_\Omega \varrho_3^{\gamma_3-1}\omega_3 \;\dd x. \]
 In conclusion, our energy estimates read
 \begin{multline*}
 \mu\int_\Omega |\mathbb{D} u|^2 \;\dd x +\mu f\int_{\partial\Omega} |u|^2\;\dd S + \lambda\int_\Omega (\ddiv u)^2 \;\dd x + \frac{\dd}{\dd t}\sum_{i=1}^N\frac{1}{\gamma_i-1}\int_\Omega p_i(\varrho_i)\;\dd x \\
 +\sum_{i=1}^N \int_\Omega \frac{1}{\varrho_i}|F_i|^2 \dd x - \frac{\gamma_3}{m_3}\int_\Omega \varrho_3\omega_3(\vec\varrho\,) \;\dd x \leq 0 \end{multline*}
 and after integrating over time and estimating the last term by 
 \[ C\|\varrho\|_{L^\infty([0,T]\times\Omega)}^{\gamma_3-1}\int_0^T\int_\Omega \omega_3(\vec\varrho\,)\dd x\dd t, \]
 we get the inequality \begin{multline*} \int_0^T\int_\Omega 
\mu|\mathbb{D} u|^2 + \lambda(\ddiv u)^2\;\dd x\dd t + \sup_{t\in [0,T]}\sum_{i=1}^N\frac{1}{\gamma_i-1}\int_\Omega p_i(\varrho_i)\dd x + \sum_{i=1}^N \int_0^T\int_\Omega \frac{1}{\varrho_i}|F_i|^2 \dd x\dd t \\
\leq \sum_{i=1}^N\frac{1}{\gamma_i-1}\int_\Omega p_i(\varrho_{0,i})\dd x + C\|\varrho\|_{L^\infty([0,T]\times\Omega)}^{\gamma_3-1}\int_0^T\int_\Omega \omega_3(\vec\varrho\,)\dd x\dd t. \end{multline*}
However, the straightforward integration of the equation on $\rho_3$ yields
 \[ \int_0^T\int_\Omega \omega_3(\varrho)\;\dd x\dd s = \int_\Omega \varrho_3(T,x)\dd x - \int_\Omega \varrho_{0,3}(x)\dd x \leq C \]
 and the inequality (\ref{energy}) follows.

\smallskip 
 
{\bf Step III.}
 To obtain the estimate for the density, it is required to study the effective viscous flux
 \begin{equation}\label{d} (2\mu+\lambda)\ddiv u = p(\vec\varrho\,)-\frac{1}{|\Omega|}\int_\Omega p(\vec\varrho\,)\dd x + d, \end{equation}
where $\Delta d=0$ in $\Omega$. 
The formula (\ref{d}) is derived by a projection on the potential part of the momentum equation (\ref{diff_react}).

The structure of the slip conditions allows to split the field into two parts:
the divergence-free and potential ones, denoted by $\cP u$ and $\cQ u $ respectively. Altogether we have $u=\cP u + \cQ u$, where
\begin{equation}
    \begin{array}{l}
 \rot \, \cP u = \alpha,\quad \div \, \cP u =0,   \\
n \cdot \cP u=0,
    \end{array}
    \qquad
    \begin{array}{ll}
   \div \, \cQ u = \div\, u,\quad \rot \, \cQ u =0 \quad & \mbox{in }
   \Omega,\\
n\cdot \cQ u=0 & \mbox{at } \partial \Omega.
    \end{array}
\end{equation}
The rotation $\alpha = \rot \,u$ can be determined separately from the equation on the vorticity:
\begin{equation}\label{alf1}
\begin{array}{ll}
    -\mu \Delta \alpha =0 & \mbox{in } \Omega,\\
    \alpha \cdot \tau_1=(2\chi_1 -f/\mu) u \cdot \tau_1,\quad
    \alpha \cdot \tau_2=(f/\mu -2\chi_2) u\cdot \tau_2 & \mbox{at } \partial \Omega,\\
    \frac{\partial(\alpha \cdot n)}{\partial n}=-(\alpha\cdot \tau_1)_{\tau_1} - (\alpha\cdot \tau_2)_{\tau_2}
    & \mbox{at } \partial \Omega.\\
    \end{array}
\end{equation}
The form of boundary conditions in (\ref{alf1}) is a consequence of fine properties of 
slip boundary conditions, described in more details, e.g. in \cite{Mucha, Mucha-Rautmann}.

The theory of maximal regularity estimate for the elliptic problems yields
\begin{equation}
    \|\alpha \|_{L^\infty(0,T;W^1_p)}\leq C \|u|_{\partial \Omega}\|_{L^\infty(0,T;W^{1-1/p}_p)},
\end{equation}
see details in \cite{Mucha-Pokorny}. In consequence, as $\|\mathcal{P}u\|_{L^\infty(0,T;W^{2,p})}\leq C\|\mathrm{rot}u\|_{L^\infty(0,T;W^{1,p})}$, we conclude that
\begin{equation}\label{Pu}
    \|\cP u\|_{L^\infty(0,T;W^{2,p})} \leq C\|u\|_{L^\infty(0,T;W^{1,p})}.
\end{equation}
We will now estimate the harmonic function $d$ from the equation (\ref{d}). We have $\nabla d =\cP \Delta \cP u$ and $\int d(x,t) \;\dd x=0$. Then from (\ref{Pu}) we get
\begin{equation}\label{d_est} \|d\|_{L^\infty(0,T;W^{1,p})}\leq C\|\mathcal{P}u\|_{L^\infty(0,T;W^{2,p})} \leq C\|u\|_{L^\infty(0,T;W^{1,p})}. \end{equation}



 



Using (\ref{energy}), we are able to estimate different norms of $p$ in terms of $\|\varrho\|_{L^\infty((0,T)\times\Omega)}$. In particular, we have
 \begin{equation}\label{p_int} \sup_{t\in[0,T]}\int_\Omega p(\vec\varrho\,)\;\dd x \leq C + C\|\varrho\|_{L^\infty([0,T]\times\Omega)}^{\gamma_3-1}. \end{equation}
 

\noindent Moreover, the structure of the pressure implies that
\begin{equation}\label{p_rho} C\varrho^{\gamma_{\min}}-C\leq p(\vec\varrho\,) \leq C\varrho^{\gamma_{\max}}+C, \end{equation}
for some $C>0$ depending on $N,m_1,\dots,m_N$, where $\gamma_{\min}$ and $\gamma_{\max}$ are respectively the minimum and maximum of $\{\gamma_1,\dots,\gamma_N\}$. In consequence, since by the interpolation of $L^p$ spaces
\[ \int_\Omega |p(\vec\varrho\,)|^p\dd x \leq \|p(\vec\varrho\,)\|_{L^\infty(\Omega)}^{p-1}\int_\Omega p(\vec\varrho\,)\dd x \leq C(\|\varrho\|_{L^\infty(\Omega)}^{\gamma_{\max}(p-1)}+1)(1+\|\varrho\|_{L^\infty([0,T]\times\Omega)}^{\gamma_3-1}), \]
we get that 
\[ \|p(\vec\varrho\,)\|_{L^\infty(0,T;L^p)} \leq C+C\|\varrho\|_{L^\infty((0,T)\times\Omega)}^{\tilde\gamma} \]
for $\tilde\gamma = \gamma_{\max}-\frac{\gamma_{\max}-\gamma_3+1}{p}$.
Therefore from the elliptic estimates we also have
\[ \|u\|_{L^\infty(0,T;W^{1,p})} \leq C+C\|\varrho\|_{L^\infty((0,T)\times\Omega)}^{\tilde\gamma} \]
and if $p>3$, then using (\ref{d_est}) we get
\begin{equation}\label{d_infty} \|d\|_{L^\infty((0,T)\times\Omega)} \leq C\|d\|_{L^\infty(0,T;W^{1,p})} \leq C+C\|\varrho\|_{L^\infty((0,T)\times\Omega)}^{\tilde\gamma}. \end{equation}

Note that if $\gamma_1,\dots,\gamma_N$ satisfy the relation (\ref{gammas}), then there exists such $p>3$ that $\tilde\gamma < \gamma_{\min}$. Moreover, as $\gamma_{\min}\leq \gamma_3 \leq \gamma_{\max}$, it also follows that $\gamma_3-1<\gamma_{\min}$.

\smallskip

{\bf Step IV.} The last step finishes our proof of Lemma \ref{oszac1}. The key point of the estimate requires a special point-wise conditional estimate controlling the sign of the divergence of "bad" regions.

\begin{lem}\label{divu_positive}
    If $\varrho(t,x)>\|\varrho\|_{L^\infty((0,T)\times\Omega)}-1$ and $\|\varrho\|_{L^\infty((0,T)\times\Omega)}$ is sufficiently large, then 
    \[ \ddiv u(t,x) >0. \]
\end{lem}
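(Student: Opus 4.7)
The plan is to read the inequality (\ref{d}) pointwise at the ``hot spot'' $(t,x)$ where $\varrho(t,x) > M - 1$, with $M := \|\varrho\|_{L^\infty((0,T)\times\Omega)}$, and to show that the pressure term dominates the other two contributions by a strict margin in the exponent of $M$. Concretely, from
\[ (2\mu+\lambda)\ddiv u(t,x) = p(\vec\varrho(t,x)) - \frac{1}{|\Omega|}\int_\Omega p(\vec\varrho\,)\,\dd y + d(t,x), \]
the strategy is: (i) lower-bound $p(\vec\varrho(t,x))$ by a power $M^{\gamma_{\min}}$, (ii) upper-bound the spatial mean of $p$ by a power $M^{\gamma_3-1}$, and (iii) upper-bound $d$ in $L^\infty$ by $M^{\tilde\gamma}$. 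Thanks to the concluding observation of Step III, both $\gamma_3-1$ and $\tilde\gamma$ are strictly smaller than $\gamma_{\min}$, so the pressure wins for $M$ large.

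First, I would justify the lower bound on $p(\vec\varrho(t,x))$. Since $\varrho(t,x) > M-1$, at least one component $\varrho_i(t,x)$ is at least $(M-1)/N$, which is large when $M$ is large. For that index, $\varrho_i^{\gamma_i} \geq ((M-1)/N)^{\gamma_i} \geq c M^{\gamma_i} \geq c M^{\gamma_{\min}}$ (using $(M-1)/N \geq 1$ for $M$ large and monotonicity of $s \mapsto s^{\gamma_i}$ for $s \geq 1$ combined with the fact that any $\gamma_i \geq \gamma_{\min}$ gives $s^{\gamma_i} \geq s^{\gamma_{\min}}$ when $s\geq 1$). This yields
\[ p(\vec\varrho(t,x)) \geq c\, M^{\gamma_{\min}} \]
for some $c>0$ depending only on $N$ and the $m_i$'s.

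Next, I would invoke the estimates already established. The spatial mean of $p$ is controlled by (\ref{p_int}), giving
\[ \frac{1}{|\Omega|}\int_\Omega p(\vec\varrho\,)\,\dd y \leq C + C M^{\gamma_3 - 1}, \]
and the harmonic part $d$ is controlled by (\ref{d_infty}),
\[ \|d\|_{L^\infty((0,T)\times\Omega)} \leq C + C M^{\tilde\gamma}. \]
Plugging the three bounds into (\ref{d}) yields
\[ (2\mu+\lambda)\ddiv u(t,x) \geq c M^{\gamma_{\min}} - C\bigl(1 + M^{\gamma_3 - 1} + M^{\tilde\gamma}\bigr). \]

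Finally, I would conclude by exploiting the gap in the exponents. Step III explicitly records that $\gamma_3 - 1 < \gamma_{\min}$ and that, thanks to the assumption (\ref{gammas}), there exists $p>3$ for which $\tilde\gamma < \gamma_{\min}$. Consequently $M^{\gamma_{\min}}$ dominates $M^{\max(\gamma_3-1,\tilde\gamma)}$ as $M\to\infty$, so the right-hand side above is strictly positive once $M$ is large enough, which is exactly the claim. The only conceptual subtlety — and the one I would be most careful to write out — is the first step: converting the pointwise bound on the \emph{total} density into a lower bound on $p(\vec\varrho\,)$, since $p$ is built from the individual $\varrho_i^{\gamma_i}$ with possibly very different exponents; the pigeonhole argument above is what makes $\gamma_{\min}$ the correct exponent and is the reason (\ref{gammas}) is stated in terms of $\gamma_{\min}$.
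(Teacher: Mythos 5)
Your proposal is correct and follows essentially the same route as the paper: decompose via (\ref{d}), bound the mean of $p$ by (\ref{p_int}) and $d$ by (\ref{d_infty}), and win on exponents since $\gamma_{\min}>\max(\tilde\gamma,\gamma_3-1)$. The only difference is cosmetic — your pigeonhole lower bound $p(\vec\varrho(t,x))\geq c\,M^{\gamma_{\min}}$ is just a re-derivation of the already-recorded structural inequality (\ref{p_rho}), which the paper invokes directly.
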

\begin{proof}
\noindent
Combining the estimates (\ref{p_int}), (\ref{p_rho}) and (\ref{d_infty}) with (\ref{d}), we obtain
\[\begin{aligned} \ddiv u(t,x) &= p(\vec\varrho(t,x))-\frac{1}{|\Omega|}\int_\Omega p(\vec\varrho\,)\dd x -d \\
&\geq C\varrho(t,x)^{\gamma_{\min}}-C - C\|\varrho\|_{L^\infty((0,T)\times\Omega)}^{\gamma_3-1}-\|d\|_{L^\infty((0,T)\times\Omega)}^{\tilde\gamma} \\
&\geq C\left(\left(\|\varrho\|_{L^\infty((0,T)\times\Omega)}-1\right)^{\gamma_{\min}} - \|\varrho\|_{L^\infty((0,T)\times\Omega)}^{\tilde\gamma}-\|\varrho\|_{L^\infty((0,T)\times\Omega)}^{\gamma_3-1}-1\right). 
\end{aligned}\]
As $\gamma_{\min}>\max(\tilde\gamma,\gamma_3-1)$, the function $ z\mapsto (z-1)^{\gamma_{\min}}-z^{\tilde\gamma}-z^{\gamma_3-1}-1$
is strictly positive for sufficiently large $z$ and thus we arrive to the desired conclusion.
\end{proof}

\smallskip 

To finish the proof of Lemma \ref{oszac1}, let us assume that $\|\varrho\|_{L^\infty((0,T)\times\Omega)}>\|\varrho_0\|_{L^\infty(\Omega)}+1$ (otherwise $C$ from the statement of the Lemma is just equal to $\|\varrho_0\|_{L^\infty(\Omega)}+1$) and let $k>\|\varrho\|_{L^\infty((0,T)\times\Omega)}-1$. We test the continuity equation on $\varrho$ by $(\varrho-k)_+$, where $f_+$ denotes the positive part of $f$. Then 
\[\begin{aligned} \frac{1}{2}\int_\Omega (\varrho-k)_+^2(t,\cdot)\;\dd x - \frac{1}{2}\int_\Omega (\varrho_0-k)_+^2\;\dd x &=  \int_0^t\int_\Omega \varrho u\cdot\nabla(\varrho-k)_+\;\dd x\dd s \\
&= -\int_0^t\int_\Omega \ddiv u\left(\frac{1}{2}(\varrho-k)_+^2 + k(\varrho-k)_+\right)\;\dd x\dd s.
\end{aligned}\]

Therefore using the fact that $\varrho_0<k$, we obtain
\[
\sup_{\tau \in [0,t)}\int_\Omega (\varrho-k)_+^2(x,\tau) \dd x \leq -\int_0^t\int_\Omega (\varrho-k)_+(\varrho+k)\ddiv u\dd x\dd s. \]

Now if $\|\varrho\|_{L^\infty((0,T)\times\Omega)}$ is large enough, then from Lemma \ref{divu_positive} the right hand side is non-positive.  
In consequence $(\varrho-k)_+=0$ a.e. in $(0,T)\times\Omega)$ and thus
\[ \varrho<\|\varrho\|_{L^\infty((0,T)\times\Omega)}-1 \quad \text{a.e. in} \quad [0,T]\times\Omega. \]
This leads to contradiction, and in consequence $\|\varrho\|_{L^\infty((0,T)\times\Omega)}$ is bounded by some constant depending only on $\|\vec\varrho_0\|_{L^\infty(\Omega)}$. Thus Lemma \ref{oszac1} is proved.

\smallskip 
\begin{rem}
As $\|\varrho\|_{L^\infty((0,T)\times\Omega)}\leq C$, by the relation (\ref{d}) $\|\ddiv u\|_{L^\infty((0,T)\times\Omega)}\leq C$ as well. Therefore from the elliptic estimates $\|\nabla\cQ u\|_{L^\infty(0,T;BMO)}\leq C$, and by (\ref{Pu})
\begin{equation}\label{bmo}  
 \|\nabla u\|_{L^\infty(0,T;BMO)}\leq C. 
 \end{equation}
 \end{rem}

\section{Existence for an approximated system}\label{existence_sect}

The goal of this section is to prove Theorem \ref{appr_th}. Similarly as in Section \ref{a_priori_sect}, the proof is valid for different $\gamma_i$'s under the constraint (\ref{gammas}), however for simplicity we restrict themselves to the case $\gamma_1=\dots=\gamma_N=2$.
%
Note that the Neumann condition on $\varrho_i$ implies that $F_{i}\cdot n=0$ for $i=1,\dots,N$ as well, since
\[ F_i\cdot n = 2\varrho_i(\nabla\varrho_i\cdot n) - 2\frac{\varrho_i}{\varrho}\sum_{j=1}^N\varrho_j(\nabla\varrho_j\cdot n) = 0 \quad \text{on} \quad \partial\Omega \]

\smallskip 

\paragraph{Proof of Theorem \ref{appr_th}:} Let us introduce another level of approximation, depending on a small parameter $\delta$. We improve the integrability in time of the densities, by adding the terms $\delta\tilde\varrho^{\beta-2}\varrho_i$ for
$\displaystyle\tilde\varrho = \sum_{i=1}^N |\varrho_i|$ and $\beta= 6$. We also introduce the truncation of $\varrho_i$, defined as 
\[ \varrho_i^\delta := \mathrm{sgn}(\varrho_i)\min\left(|\varrho_i|,\frac{1}{\delta}\right). \]
 Then we put
\[ \omega_i^\delta(\vec\varrho) := \omega_i\Big(\varrho_1^\delta,\dots, \varrho_N^\delta\Big) \mbox{ \ \ 
and \ \ }
 \tilde\varrho_i^\delta := \min\left(|\varrho_i|,\frac{1}{\delta}\right) = |\varrho_i^\delta|. \]
There is also the need to approximate the pressure and the diffusion fluxes. We put
\begin{equation}
    p_{i}^\delta(\varrho_i)=\frac{2}{m_i} \int_0^{\varrho_i}\min\left(|w|,\frac{1}{\delta}\right)\dd w
\end{equation}
and define $F_{i}^\delta$ as
\begin{equation}
    F_{i}^\delta= \nabla p_{i}^\delta(\varrho_i)-
\frac{\tilde\varrho_i^\delta}{\tilde \varrho^\delta}\nabla p^\delta,
\mbox{ \ where \ \ }
 \tilde \varrho^\delta = \sum_{k=1}^N \left|\varrho_k^\delta\right| \quad \text{and} \quad p^\delta = \sum_{k=1}^N p_k^\delta(\varrho_k). 
\end{equation}
Note that $F_{i}^\delta$ satisfy the same necessary properties as $F_i$, namely $\displaystyle\sum_{i=1}^N F_{i}^\delta=0$ and
\[ \sum_{i=1}^N \frac{1}{m_i}\nabla\varrho_i\cdot F_{i}^\delta = \sum_{i=1}^N \frac{|F_{i}^\delta|^2}{|\varrho_i^\delta|} \geq 0. \]

Finally, the $(\delta,\varepsilon)$-approximative system reads

\begin{equation}\label{approx2}
    \begin{aligned}
    \partial_t\varrho_{i} + \ddiv(\varrho_{i} u) -\ddiv F_{i}^\delta +\delta \tilde\varrho^{\beta-2}\varrho_i &= \omega_{i}^\delta(\vec\varrho\,) + \varepsilon\Delta\varrho_i, \quad i=1,\dots,N ,\\
   -\mu\Delta u -\nabla((\mu+\lambda)\ddiv u) + \nabla p(\vec\varrho\,) &= 0.
    \end{aligned}
\end{equation}



\subsection{The Galerkin approximation}
We  construct  solutions to (\ref{approx2}) using the Galerkin method. Note that taking into account the slip boundary conditions, the weak formulation of the second equation of (\ref{approx2}) reads
\begin{equation}
\int_0^T\int_\Omega \mu\mathbb{D}u:\nabla\phi + \lambda \div u \,\div \phi \;\dd x\dd t + \int_0^T\int_{\partial\Omega} \mu f \; u\cdot\phi\;\dd S\dd t =
\int_0^T\int_\Omega p \,\div \phi \;\dd x\dd t,
\end{equation}
provided that $\phi \in C^\infty([0,T]\times\overline{\Omega},R^3)$ with $\phi \cdot n=0$ at $\partial \Omega$. We show that

\begin{lem}
    The equation (\ref{approx2}) admits a global weak solution in $[0,T]\times\Omega$.
\end{lem}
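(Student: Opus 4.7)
The plan is to construct weak solutions to (\ref{approx2}) via a Galerkin scheme for the density equations, coupled with a direct elliptic solve for $u$, and closed by a Schauder fixed point argument. Since the momentum equation carries no time derivative of $u$, at each time $t$ it reduces to a linear Lam\'e-type system with slip boundary conditions and source $-\nabla p(\vec\varrho\,)$. For any $\vec\varrho \in L^\infty(0,T;L^q(\Omega))^N$ with $q>3$, the classical elliptic theory used already in Section \ref{a_priori_sect} produces a unique $u[\vec\varrho\,] \in L^\infty(0,T;W^{1,q}(\Omega))$ depending continuously on $\vec\varrho$, giving in particular an $L^\infty$ bound on $\ddiv u$. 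This is the Stokes step of the iteration.

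For the density equations, I would fix an orthonormal basis $\{e_k\}_{k\geq 1}$ of $L^2(\Omega)$ made of eigenfunctions of $-\Delta$ with homogeneous Neumann condition, set $Y_n=\mathrm{span}\{e_1,\dots,e_n\}$, and look for $\varrho_i^n(t,x)=\sum_{k=1}^n c_{i,k}(t) e_k(x)$ satisfying the first equation of (\ref{approx2}) projected onto $Y_n$, with velocity $u[\vec\varrho^n]$. This yields a system of ODEs for the coefficients $c_{i,k}$; thanks to the $\delta$-truncations, the objects $p_i^\delta$, $\omega_i^\delta$ and $F_i^\delta$ (with the convention $F_i^\delta = 0$ where $\tilde\varrho^\delta = 0$) are bounded and locally Lipschitz in $\vec\varrho$, so Cauchy--Lipschitz provides local-in-time solutions. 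Testing against $\varrho_i^n$ and summing over $i$, the identity
\[ \sum_{i=1}^N \frac{1}{m_i}\nabla\varrho_i^n\cdot F_i^\delta = \sum_{i=1}^N \frac{|F_i^\delta|^2}{|\varrho_i^\delta|} \geq 0 \]
recorded above absorbs the flux contribution, while the $\delta$-dissipation $\delta \tilde\varrho^{\beta-2}\varrho_i^n$ (tested against $\varrho_i^n$) and the $\varepsilon$-Laplacian yield $n$-uniform bounds
\[ \|\varrho_i^n\|_{L^\infty(0,T;L^\beta) \cap L^2(0,T;H^1)} \leq C(\delta,\varepsilon), \]
which extend the local ODE solutions globally on $[0,T]$.

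Passing to the limit $n\to\infty$ uses Aubin--Lions: the above bounds together with the bound on $\partial_t\varrho_i^n$ in $L^2(0,T;(H^1)^*)$ read off from the equation give strong convergence of $\vec\varrho^n$ in $L^2(0,T;L^2(\Omega))$. The truncated nonlinearities $p_i^\delta$, $\omega_i^\delta$, $F_i^\delta$ are continuous with respect to this convergence, and the $\varepsilon\Delta$-term passes by weak $L^2(0,T;H^1)$-convergence. One therefore recovers a weak solution of the density equations with velocity $u[\vec\varrho\,]$, and the Neumann boundary condition together with the initial data are preserved via the standard trace and time-continuity arguments.

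To close the system, apply Schauder's fixed point theorem to the map $\vec\varrho \mapsto \vec\varrho'$, where $\vec\varrho'$ is produced by the scheme above started from the velocity $u[\vec\varrho\,]$. The invariant convex set is a ball in $L^2(0,T;L^2(\Omega))^N$: invariance follows from the bounds above, and the compactness needed for Schauder comes from the $L^2(0,T;H^1)$ estimate combined with the time bound via Aubin--Lions. I expect the main obstacle to be the continuity of $\vec\varrho \mapsto F_i^\delta(\vec\varrho\,)$ near the set $\{\tilde\varrho^\delta = 0\}$, where the ratio $\tilde\varrho_i^\delta/\tilde\varrho^\delta$ is formally indeterminate; the truncation $\min(|\varrho_i|,1/\delta)$ and the convention $F_i^\delta = 0$ on this set keep $F_i^\delta$ bounded in $L^\infty$ and continuous in the strong $L^2$-topology, which allows one to apply the fixed point theorem and obtain the desired global weak solution of (\ref{approx2}).
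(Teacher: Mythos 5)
Your overall route is essentially the paper's: a Galerkin scheme for the densities in which the velocity is slaved to the Galerkin densities through the momentum equation, Picard--Lindel\"of for the coefficient ODEs (using that the truncated nonlinearities, in particular $z_iz_j/\sum_k z_k$ on the nonnegative orthant, are locally Lipschitz), an energy estimate to globalize, and Aubin--Lions to pass $n\to\infty$. The paper merely realizes your ``elliptic solve'' inside the Galerkin framework, choosing the velocity basis orthonormal with respect to the Stokes bilinear form so that $b_k=\int_\Omega p(\vec\varrho_n)\ddiv v_k\,\dd x$ is explicit. However, there are two genuine soft spots. First, the claim that $u[\vec\varrho\,]\in L^\infty(0,T;W^{1,q})$ with $q>3$ ``gives in particular an $L^\infty$ bound on $\ddiv u$'' is false: $W^{1,q}$ does not control $\nabla u$ pointwise, and at this stage $\ddiv u$ is only as integrable as the pressure (the $L^\infty$ bound on $\varrho$, hence on $\ddiv u$, is obtained only \emph{after} the solution is constructed, via De Giorgi and Step IV of Lemma \ref{oszac1}). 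This matters because your energy estimate is silent on the convection term: testing the $i$-th equation by $\varrho_i^n/m_i$ produces $\tfrac1{m_i}\int_\Omega \ddiv u\,(\varrho_i^n)^2\,\dd x$, which you cannot absorb without $\ddiv u\in L^\infty$. The correct (and standard) fix, which is what the paper does, is to also test the momentum equation by $u[\vec\varrho^n]$ (legitimate, since the elliptic problem is solved exactly) and use the exact cancellation of $\int_\Omega p(\vec\varrho^n)\ddiv u\,\dd x$ against the convection byproduct, which for $\gamma_i=2$ and the weights $1/m_i$ matches identically; this also produces the $\mathbb{D}u$ and flux terms in (\ref{ap_energy}). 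As written, your estimate chain is incomplete at precisely this point.

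Second, the concluding Schauder layer is both redundant and, as formulated, not viable. It is redundant because once the velocity entering the Galerkin ODEs is $u[\vec\varrho^n]$, the finite-dimensional system is already closed: Cauchy--Lipschitz plus the (repaired) energy bound and Aubin--Lions give in the limit a weak solution of the fully coupled system (\ref{approx2}), with $u[\vec\varrho^n]\to u[\vec\varrho\,]$ following from the strong convergence of the densities and linearity of the Lam\'e solve. It is not viable as stated because on a ball of $L^2((0,T)\times\Omega)^N$ the pressure $p(\vec\varrho\,)$ is only in $L^1$, so the map $\vec\varrho\mapsto u[\vec\varrho\,]$ does not land in $L^\infty(0,T;W^{1,q})$, $q>3$, contradicting the setup of your first paragraph; moreover the frozen-velocity density subproblem is a quasilinear cross-diffusion system through $F_i^\delta$, so single-valuedness and continuity of the solution map $\vec\varrho\mapsto\vec\varrho'$ would themselves require proof. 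Drop the fixed-point step and close the argument at the Galerkin level, as above; your treatment of the degeneracy of $F_i^\delta$ on $\{\tilde\varrho^\delta=0\}$ is then exactly the paper's Lipschitz observation and is fine.
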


\begin{proof} Let $\{w_k\}_{k\in\mathbb{N}}\in C^\infty(\overline{\Omega})$ and $\{v_k\}_{k\in\mathbb{N}}\in (C^\infty(\overline{\Omega}))^d$ be the orthogonal bases of $H^1(\Omega)$ and $(H^1(\Omega))^d$, such that
\[ \frac{\partial w_k}{\partial n} = 0, \quad v_k\cdot n=0 \quad \text{on} \quad \partial\Omega \quad \text{for all} \quad k\in\mathbb{N}, \]
$\{w_k\}$ is orthonormal in $L^2(\Omega)$ and
\[ \forall_{k,l\in\mathbb{N}}: \quad \mu\int_\Omega \mathbb{D} v_k:\mathbb{D} v_l \;\dd x + \mu f\int_{\partial\Omega} v_k\cdot v_l\;\dd S + \lambda\int_\Omega \ddiv v_k\ddiv v_l\;\dd x = \delta_{k,l}. \]
Define
\[ \varrho_{i,n}=\sum_{k=1}^n a_{i,k}(t)w_k \quad
\text{and} \quad u_n = \sum_{k=1}^n b_k(t)v_k, \]
where the coefficients $a_{i,k}$ satisfy the system of $n\times N$ ODEs
\begin{multline}\label{a} \dot a_{i,k} = \sum_{l,m=1}^n a_{i,l}b_m\int_\Omega w_l v_m\nabla w_k \dd x - \int_\Omega F_{i,n}^\delta\cdot\nabla w_k\dd x \\
- \varepsilon a_{i,k}\int_\Omega |\nabla w_k|^2\dd x - \delta\sum_{l=1}^n a_{i,l}\int_\Omega \tilde\varrho_n^{\beta-2}w_lw_k\dd x + \int_\Omega \omega_i^\delta(\vec\varrho_n) w_k \dd x \end{multline}
and $b_k$ is given by
\begin{equation}\label{b} b_k = \int_\Omega p(\vec\varrho_n)\ddiv v_k \dd x \end{equation}
for $\vec\varrho_n = (\varrho_{1,n},\dots,\varrho_{N,n})$. Since the functions
\[ (z_1,\dots,z_N)\mapsto \frac{z_iz_j}{\sum_{k=1}^N z_k}, \quad i,j=1,\dots,N \]
are Lipschitz continuous on $\{z\in\mathbb{R}^N: z_k\geq 0 \;\; \forall_{k=1,\dots,N}\}$, the terms $\displaystyle\int_\Omega F_i^\delta\cdot\nabla w_k\dd x$ are locally Lipschitz with respect to $a_{i,k}\in\mathbb{R}^{N\times n}$. Therefore, plugging the relation (\ref{b}) into (\ref{a}), we see that the right hand side of (\ref{a}) is locally Lipschitz and from the Picard-Lindel\"of theorem there exists a local in time solution to (\ref{a}), corresponding to the initial condition
\[ a_{i,k}(0) = \langle\varrho_{0,i,\varepsilon},w_k\rangle_{L^2}. \]

To obtain the solution on a whole interval $[0,T]$, we need to find the global estimate. We test the momentum equation by $u_n$ and integrate by parts. Then, we get

\[\begin{aligned} \mu\int_0^T\int_\Omega |\mathbb{D} u_n|^2\dd x\dd t + \mu f\int_0^T\int_{\partial\Omega} |u_n|^2\;\dd S\dd t + \sup_{t\in [0,T]}\sum_{i=1}^N\frac{1}{m_i}\int_\Omega\varrho_{i,n}^2\dd x \\
+ \sum_{i=1}^N\int_0^T\int_\Omega \frac{|F_{i}^\delta|^2}{\tilde\varrho_{i,n}^\delta}\dd x\dd t +2\delta\sum_{i=1}^N\frac{1}{m_i}\int_0^T\int_\Omega \tilde\varrho_n^{\beta-2}\varrho_{i,n}^2\dd x\dd t \\
+2\varepsilon\sum_{i=1}^N\frac{1}{m_i}\int_0^T\int_\Omega |\nabla\varrho_{i,n}|^2\dd x\dd t - 2\sum_{i=1}^N\frac{1}{m_i}\int_0^T\int_\Omega\varrho_{i,n}\omega_i^\delta(\vec\varrho_n)\dd x\dd t &\leq \sum_{i=1}^N\frac{1}{m_i}\int_\Omega\varrho_{0,i,\varepsilon}^2\dd x. 
\end{aligned}\]
From Korn's and Poincar\'e's inequalities (see e. g. \cite{novotny-straskraba}), we have
\[ \|u_n\|_{W^{1,2}(\Omega)}^2 \leq C\int_\Omega|\mathbb{D}u_n|^2\;\dd x + C\int_{\partial\Omega} |u_n|^2\;\dd S. \]
Moreover, as $\omega_i^\delta\in L^\infty((0,T)\times\Omega)$, from the Cauchy inequality we have
\[ \int_0^T\int_\Omega \varrho_{i,n}\omega_i^\delta\dd x \leq \eta\|\varrho_{i,n}\|_{L^\infty(0,T;L^2)}^2 + \frac{CT^2}{\eta}\|\omega_i^\delta\|_{L^\infty((0,T)\times\Omega)}^2. \]
Therefore choosing $\eta$ sufficiently small, we get the estimates

\begin{multline}\label{ap_energy} \|u_n\|_{L^2(0,T;W^{1,2})}^2 + \sum_{i=1}^N\left(\|\varrho_{i,n}\|_{L^\infty(0,T;L^2)}^2 + \varepsilon\|\nabla\varrho_{i,n}\|_{L^2((0,T)\times\Omega)}^2+\delta\|\varrho_{i,n}\|_{L^\beta((0,T)\times\Omega)}^\beta\right) \\
\leq C + C(\delta)T. \end{multline}
In particular, we get the bound on $\sum_{i=1}^N\sum_{k=1}^n a_{i,k}^2(t)$, which provides that we can extend the solution to a whole interval $[0,T]$. The inequality (\ref{ap_energy}) also provides the estimates uniform in $n$, which allow us to extract weakly convergent subsequences (indexed again by $n$)
\[ u_n,\varrho_{1,n},\dots,\varrho_{N,n} \rightharpoonup u,\varrho_1,\dots,\varrho_N \quad \text{in} \quad L^2(0,T;W^{1,2}). \]
From the estimate on $\|\varrho_{i,n}\|_{L^\beta((0,T)\times\Omega)}$ it follows that $\|p_n\|_{L^{\beta/2}((0,T)\times\Omega)} \leq C$. Therefore, by  the momentum equation, we deduce also 
\[ \|u_n\|_{L^{\beta/2}(0,T;W^{1,\beta/2})} \leq C. \]
Since $\beta=6$, it follows that 
\begin{equation}\label{beta_est} \|\varrho_{i,n}u_n\|_{L^2(0,T;L^q)} \leq C \quad \text{for any} \quad q<6 \end{equation}
and in particular
\[ \|\varrho_{i,n} u_n\|_{L^2((0,T\times\Omega)}, \|F_{i,n}^\delta\|_{L^2((0,T)\times\Omega)} \leq C. \]
In consequence, we obtain the uniform bound on $\|\partial_t\varrho_{i,n}\|_{L^2(0,T;H^{-1})}$ and from the Aubin-Lions Lemma
\[ \varrho_{i,n}\to \varrho_i \quad \text{in} \quad L^2((0,T)\times\Omega). \]
This allows us to pass to the limit with $n\to\infty$ in the weak formulation of (\ref{approx2}) and in consequence obtain a weak solution.
\end{proof}

\subsection{Nonnegativity of the components}
We will now prove that for $(\varrho_1,\dots,\varrho_N,u)$ solving (\ref{approx2}), we have
\[ \varrho_i \geq 0 \quad \text{a. e.} \quad \forall_{i=1,\dots,N}. \]
Note that the low regularity of $\varrho_i$ does not allow us to replicate the argument from Section \ref{a_priori_sect}. Instead, we will obtain the result by choosing a suitable test function. 

Let 
\[ f_h(x)=\left\{\begin{aligned}
    h\ln h, \quad x>0, \\
    x+h\ln(|x|+h), \quad x\leq 0.
\end{aligned}\right.\]
We test the equation on $\varrho_i$ by $f_h'(\varrho_i)=\frac{|\varrho_i|}{|\varrho_i|+h}\mathbbm{1}_{\{\varrho_i\leq 0\}}=\frac{\varrho_i^-}{\varrho_i^-+h}$. Since $f_h''$ is bounded and $\varrho_i$ is in $L^2(0,T;H^1)$, $f_h'(\varrho_i)\in L^2(0,T;H^1)$ as well. First, observe that if $\varphi_\eta(t,x)$ is the standard mollifier over time and space, then for $(\varrho_i)_\eta=\varrho_i\ast\varphi_\eta$ we have $f_h'((\varrho_i)_\eta)\to f_h'(\varrho_i)$ in $L^2(0,T;H^1)$ and $\partial_t(\varrho_i)_\eta\rightharpoonup^*\partial_t\varrho_i$ in $L^2(0,T;H^{-1})$. Thus
\[\begin{aligned} \int_0^t \langle\partial_t\varrho_i, f_h'(\varrho_i)\rangle \;\dd s &= \lim_{\eta\to 0}\int_0^t\int_\Omega \partial_t(\varrho_i)_\eta f_h'((\varrho_i)_\eta))\;\dd x\dd s \\
&= \lim_{\eta\to 0}\int_0^t\int_\Omega \partial_t f_h((\varrho_i)_\eta)\;\dd x\dd s \\
&= \lim_{\eta\to 0}\left[\int_\Omega f_h((\varrho_i)_\eta(t,\cdot))\;\dd x - \int_\Omega f_h((\varrho_i)_\eta(0,\cdot))\;\dd x\right] \\
&= \int_\Omega f_h(\varrho_i(t,\cdot))\;\dd x - \int_\Omega f_h(\varrho_i(0,\cdot))\;\dd x,
\end{aligned}\]
where in the last line we used the fact that $\varrho_i\in C(0,T;L^2)$. Employing the fact that $\varrho_i^-(0,x)=0$ we therefore get
\[\begin{aligned} \int_0^t\langle \partial_t\varrho_i, f_h'(\varrho_i)\rangle &= \int_\Omega \Big[(\varrho_i+h\ln(|\varrho_i|+h))\mathbbm{1}_{\varrho_i\leq 0} + h\ln h\mathbbm{1}_{\varrho_i>0}\Big] \;\dd x - h\ln h |\Omega| \\
&= -\int_\Omega \varrho_i^- \;\dd x + \int_\Omega h\ln(\varrho_i^-+h)\;\dd x - h\ln h|\Omega|.
\end{aligned}\]
Moreover, for the fluxes we get
\[\begin{aligned}
    -\int_0^t\int_\Omega \ddiv F_i^\delta\,f_h'(\varrho_i)\;\dd x\dd s =& \int_0^t\int_\Omega \left(\frac{1}{m_i}\tilde\varrho_i^\delta\nabla\varrho_i-\frac{\tilde\varrho_i^\delta}{\tilde\varrho^\delta}\nabla p^\delta\right)f_h''(\varrho_i)\nabla\varrho_i \;\dd x\dd s \\
    =& -\int_0^t\int_\Omega \frac{1}{m_i}\frac{h}{(|\varrho_i|+h)^2}\min(|\varrho_i|,\frac{1}{\delta})|\nabla\varrho_i|^2\mathbbm{1}_{\varrho_i\leq 0} \;\dd x\dd s \\
    &+ \int_0^t\int_\Omega \frac{h}{(|\varrho_i|+h)^2}\frac{\min(|\varrho_i|,\frac{1}{\delta})}{\tilde\varrho^\delta}\nabla p^\delta\nabla\varrho_i\mathbbm{1}_{\varrho_i\leq 0} \;\dd x\dd s \\
    =& -\int_0^t\int_\Omega \frac{1}{m_i}\frac{h}{(\varrho_i^-+h)^2}\min(\varrho_i^-,\frac{1}{\delta})|\nabla\varrho_i^-|^2\;\dd x\dd s \\
    &+ \int_0^t\int_\Omega \frac{h}{(\varrho_i^-+h)^2}\frac{\min(\varrho_i^-,\frac{1}{\delta})}{\tilde\varrho^\delta}\nabla p^\delta\cdot\nabla\varrho_i^-\;\dd x\dd s
\end{aligned}\]
Combining the above calculations with the remaining terms, we get
\[\begin{aligned}
    -\int_\Omega \varrho_i^-\;\dd x + \int_\Omega (h\ln(\varrho_i^-+h)-h\ln h)\;\dd x \\
    -\varepsilon\int_0^t\int_\Omega \frac{h}{(\varrho_i^-+h)^2}|\nabla\varrho_i^-|^2\;\dd x\dd s -\delta\int_0^t\int_\Omega \tilde\varrho^{\beta-2}\frac{(\varrho_i^-)^2}{\varrho_i^-+h}\;\dd x\dd s \\
    +\int_0^t\int_\Omega \frac{h\varrho_i^-}{(\varrho_i^-+h)^2}u\cdot\nabla\varrho_i^- \;\dd x\dd s \\
    -\int_0^t\int_\Omega \frac{1}{m_i}\frac{h}{(|\varrho_i|+h)^2}\min(\varrho_i^-,\frac{1}{\delta})|\nabla\varrho_i^-|^2\;\dd x\dd s \\
    -\int_0^t\int_\Omega \frac{h}{(\varrho_i^-+h)^2}\frac{\min(\varrho_i^-,\frac{1}{\delta})}{\tilde\varrho^\delta}\nabla p^\delta\cdot\nabla\varrho_i^-\;\dd x\dd s &= \int_0^t\int_\Omega \omega_i^\delta(\vec\varrho)\frac{\varrho_i^-}{\varrho_i^-+h} \;\dd x\dd s.
\end{aligned}\]
Neglecting the nonpositive terms, we end up with
\begin{equation}\label{nonneg_h}
\begin{aligned}
    -\int_\Omega \varrho_i^-\;\dd x + & \int_\Omega h\ln(\varrho_i^-+h)- h\ln h\;\dd x + \int_0^t\int_\Omega \frac{h\varrho_i^-}{(\varrho_i^-+h)^2}u\cdot\nabla\varrho_i^-\;\dd x\dd s \\
    &-\int_0^t\int_\Omega \frac{h}{(\varrho_i^-+h)^2}\frac{\min(\varrho_i^-,\frac{1}{\delta})}{\tilde\varrho^\delta}\nabla p^\delta\cdot\nabla\varrho_i^-\;\dd x\dd s \geq \int_0^t\int_\Omega \omega_i^\delta(\vec\varrho)\frac{\varrho_i^-}{\varrho_i^-+h}\;\dd x\dd s
\end{aligned}\end{equation}
Since 
\[ \frac{h\,\min(\varrho_i^-,\frac{1}{\delta})}{(\varrho_i^-+h)^2} \leq \frac{h}{\varrho_i^-+h}\cdot\frac{\varrho_i^-}{\varrho_i^-+h}\leq 1, \]
from the dominated convergence theorem we have
\[ \int_0^t\int_\Omega \frac{h\varrho_i^-}{(\varrho_i^-+h)^2}u\cdot\nabla\varrho_i^-\;\dd x\dd s \to 0 \]
and
\[ \int_0^t\int_\Omega \frac{h}{(\varrho_i^-+h)^2}\frac{\min(\varrho_i^-,\frac{1}{\delta})}{\tilde\varrho^\delta}\nabla p^\delta\nabla\varrho_i^- \;\dd x\dd s \to 0 \]
as $h\to 0$. Therefore passing to the limit in (\ref{nonneg_h}), we get
\[ \int_\Omega \varrho_i^-\;\dd x \leq -\int_0^t\int_\Omega \omega_i^\delta(\vec\varrho)\mathbbm{1}_{\varrho_i\leq 0} \;\dd x\dd s \]

From the assumptions on $\omega_i$ and Remark \ref{omega_rem}, for $\varrho_i\leq 0$ we have $\omega_i^\delta\geq 0$. Therefore 
\[ \int_\Omega \varrho_i^-(t,\cdot)\dd x \leq 0. \]
However, $\varrho_i^-\geq 0$ by definition, and thus $\varrho_i^-(t,\cdot)\equiv 0$ for all $t\in [0,T]$.

\subsection{Boundedness of $\varrho$}

It turns out that all components are bounded, and their $L^\infty$ bound does not depend on $\varepsilon$ and $\delta$. As we already know that $\varrho_i\geq 0$ for all $i=1,\dots,N$, it is sufficient to prove the $L^\infty$ estimate for the total density $\varrho=\sum_{i=1}^N\varrho_i$. Since $\varrho=\tilde\varrho$, the total density satisfies the equation
\[ \partial_t\varrho + \ddiv(\varrho u)-\varepsilon\Delta\varrho + \delta\varrho^{\beta-1} = 0. \]

First, let us show that $\varrho\in L^\infty((0,T)\times\Omega)$. Recall the following result (see e.g. Lemmas 7.37-38 in \cite{novotny-straskraba}):

\begin{prop}\label{parabolic_th}
For a bounded $\Omega\in C^{2,\theta}$, $0<\theta\leq 1$, let $h\in L^p(0,T;L^q)$ for $1<p,q<\infty$. Then the solution to
\[ \partial_t\varrho -\varepsilon\Delta\varrho = h, \quad \frac{\partial\varrho}{\partial n}=0 \]
with $\varrho_{|_{t=0}}=\varrho_0$ satisfies the estimate
\begin{equation}\label{parabolic_est} \varepsilon^{1-1/p}\|\varrho\|_{L^\infty(0,T;W^{2-{2/p},q})} + \|\partial_t\varrho\|_{L^p(0,T;L^q)} + \varepsilon\|\varrho\|_{L^p(0,T;W^{2,q})} \leq C \left(\varepsilon^{1-1/p}\|\varrho_0\|_{W^{1,q}} + \|h\|_{L^p(0,T;L^q)}\right). \end{equation}
Moreover, if $h=\ddiv w$, $w\in L^p(0,T;L^q)$ and $p\geq 2$, then
\begin{equation}\label{parabolic_est2} \varepsilon^{1-1/p}\|\varrho\|_{L^\infty(0,T;L^q)} + \varepsilon\|\nabla\varrho\|_{L^p(0,T;L^q)} \leq C\left(\varepsilon^{1-1/p}\|\varrho_0\|_{L^q} + \|w\|_{L^p(0,T;L^q)}\right). \end{equation}
\end{prop}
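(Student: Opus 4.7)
\textbf{Proof plan for Proposition \ref{parabolic_th}.} The result is the classical maximal $L^p$--$L^q$ regularity estimate for the Neumann heat problem, with the dependence on the viscosity parameter $\varepsilon$ made explicit. My plan is to handle both inequalities by reducing to the normalized case $\varepsilon = 1$ via a parabolic rescaling in time, and then to invoke the standard unweighted maximal regularity on $C^{2,\theta}$ domains.

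For (\ref{parabolic_est}): set $\tilde\varrho(s,x) := \varrho(s/\varepsilon,x)$ on $s \in [0,\varepsilon T]$, so that $\tilde\varrho$ solves $\partial_s \tilde\varrho - \Delta\tilde\varrho = \tilde h$ with $\tilde h(s,x) = \varepsilon^{-1} h(s/\varepsilon,x)$, the same homogeneous Neumann boundary condition, and the same initial datum. The unweighted maximal regularity for the Neumann heat operator---which follows from the Calder\'on--Zygmund theory of the heat kernel on smooth domains, or equivalently from the Dore--Venni/Weis functional calculus for generators of bounded analytic semigroups---yields
$$
\|\partial_s \tilde\varrho\|_{L^p L^q} + \|\tilde\varrho\|_{L^p W^{2,q}} + \|\tilde\varrho\|_{L^\infty W^{2-2/p,q}} \leq C \bigl(\|\varrho_0\|_{W^{1,q}} + \|\tilde h\|_{L^p L^q}\bigr)
$$
on $[0,\varepsilon T]$, with a constant independent of the horizon thanks to the boundedness of $\Omega$. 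A change of variables gives $\|\tilde h\|_{L^p(0,\varepsilon T;L^q)} = \varepsilon^{1/p-1}\|h\|_{L^p(0,T;L^q)}$, and analogously $\|\partial_s\tilde\varrho\|_{L^p L^q} = \varepsilon^{1/p-1}\|\partial_t\varrho\|_{L^p L^q}$, $\|\tilde\varrho\|_{L^p W^{2,q}} = \varepsilon^{1/p}\|\varrho\|_{L^p W^{2,q}}$, and $\|\tilde\varrho\|_{L^\infty W^{2-2/p,q}} = \|\varrho\|_{L^\infty W^{2-2/p,q}}$. Multiplying the resulting inequality by $\varepsilon^{1-1/p}$ reproduces exactly (\ref{parabolic_est}).

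For (\ref{parabolic_est2}): when $h = \ddiv w$ the rescaled source becomes $\tilde h = \ddiv \tilde w$ with $\tilde w(s,x) = \varepsilon^{-1} w(s/\varepsilon,x)$ and $\|\tilde w\|_{L^p L^q} = \varepsilon^{1/p-1}\|w\|_{L^p L^q}$. The unweighted estimate to invoke is now
$$
\|\tilde\varrho\|_{L^\infty L^q} + \|\nabla \tilde\varrho\|_{L^p L^q} \leq C\bigl(\|\varrho_0\|_{L^q} + \|\tilde w\|_{L^p L^q}\bigr),
$$
which I would establish via Duhamel: writing $\tilde\varrho(s) = e^{s\Delta_N}\varrho_0 + \int_0^s \nabla\cdot e^{(s-\sigma)\Delta_N} \tilde w(\sigma)\,\dd\sigma$ (after transferring the divergence onto the heat kernel) and using the semigroup bound $\|\nabla e^{\tau \Delta_N}\|_{L^q\to L^q}\leq C\tau^{-1/2}$ together with a Young-type convolution inequality gives the $L^\infty L^q$ estimate; the hypothesis $p \geq 2$ enters precisely here, because $\tau^{-1/2}$ pairs with an $L^p$ function in time only when $p' \leq 2$. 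The $L^p W^{1,q}$ bound on $\tilde\varrho$ follows by a parallel argument from singular-integral estimates for $\nabla^2 e^{\tau\Delta_N}$. Multiplying by $\varepsilon^{1-1/p}$ after undoing the scaling yields (\ref{parabolic_est2}).

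The main obstacle is technical rather than conceptual: verifying the unweighted maximal regularity on a bounded $C^{2,\theta}$ domain with Neumann boundary condition. The cleanest route localizes via a partition of unity, flattens the boundary, reduces to the half-space where the Neumann semigroup is handled by even reflection across the boundary, and concludes via the classical Mikhlin/$R$-bounded multiplier theory. This is precisely the content of Lemmas 7.37--7.38 of Novotn\'y--Str\'askraba cited by the authors, so for the present work one invokes them and carries out only the bookkeeping of the rescaling factors.
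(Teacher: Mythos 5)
The paper offers no proof of this proposition at all: it is quoted as Lemmas 7.37--7.38 of Novotn\'y--Str\'askraba, so your write-up necessarily supplies more than the paper does. Your reduction to the unweighted case via the rescaling $\tilde\varrho(s,x)=\varrho(s/\varepsilon,x)$ is sound and the bookkeeping is exactly right: $\|\tilde h\|_{L^pL^q}=\varepsilon^{1/p-1}\|h\|_{L^pL^q}$, $\|\partial_s\tilde\varrho\|_{L^pL^q}=\varepsilon^{1/p-1}\|\partial_t\varrho\|_{L^pL^q}$, $\|\tilde\varrho\|_{L^pW^{2,q}}=\varepsilon^{1/p}\|\varrho\|_{L^pW^{2,q}}$, and multiplying the $\varepsilon=1$ estimate by $\varepsilon^{1-1/p}$ reproduces (\ref{parabolic_est}) and (\ref{parabolic_est2}). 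One simplification: since the rescaled horizon is $\varepsilon T\le T$ and maximal-regularity constants are monotone in the interval length (extend the source by zero), uniformity in $\varepsilon$ is automatic, so you do not need the horizon-independence you assert --- which is in fact delicate for the Neumann Laplacian because of its zero eigenvalue (the spatial mean drifts, so the lower-order part of the $W^{2,q}$ norm is not controlled uniformly in the horizon).

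The genuine soft spot is the endpoint $p=2$ of (\ref{parabolic_est2}), which is precisely the case the paper uses later ($\varrho u\in L^2(0,T;L^q)$). Your mechanism --- pairing the semigroup bound $\|\nabla e^{\tau\Delta_N}\|_{L^q\to L^q}\lesssim \tau^{-1/2}$ with an $L^p$ function of time by H\"older/Young --- requires $\tau^{-1/2}\in L^{p'}$ near $\tau=0$, i.e. $p'<2$, i.e. $p>2$; at $p=2$ the integral $\int_0^S \tau^{-1}\,\dd\tau$ diverges, so your criterion ``$p'\le 2$'' fails exactly at the endpoint you need. For $p=2$ one must instead use the $L^2$-in-time boundedness of $w\mapsto\int_0^s \nabla e^{(s-\sigma)\Delta_N}\ddiv w\,\dd\sigma$ (a square-function/$H^\infty$-calculus estimate for $(-\Delta_N)^{1/2}$), or an energy-type argument, or simply cite Lemma 7.38 of Novotn\'y--Str\'askraba as the paper does. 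A secondary remark: the unweighted inequality you invoke for (\ref{parabolic_est}), with data measured only in $W^{1,q}$ but the solution controlled in $L^\infty(0,T;W^{2-2/p,q})$, is compatible with the standard trace-space theory only when $2-2/p\le 1$, i.e. $p\le 2$; for $p>2$ this mirrors the statement as written rather than a defect of your reduction, but it should not be attributed to ``standard maximal regularity'' without comment.
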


By virtue of (\ref{beta_est}), $\varrho u\in L^2(0,T;L^q)$ for a suitable $q>3$. Therefore from Proposition \ref{parabolic_th}, we have $\varrho \in L^\infty(0,T;L^q).$
Then, using the property (\ref{p_rho})
from the elliptic estimates we have $p\in L^\infty(0,T;L^{q/2})$ and thus
we get that $u\in L^\infty(0,T;W^{1,q/2}).$
In consequence, 
$\varrho u\in L^\infty(0,T;L^r)$
for some $r>3$. Therefore applying the De Giorgi  technique (see e.g. \cite{wu-yin-wang}, Chapter 4), we get that
\[ \varrho \in L^\infty((0,T)\times\Omega). \]

We  now prove that in fact the $L^\infty$ bound on $\varrho$ depends only on $\|\varrho_0\|_{L^\infty(\Omega)}$, using the reasoning analogous as in Section \ref{a_priori_sect}. Similarly as for the a priori estimates, we have
    \[ -2\sum_{i=1}^N\frac{1}{m_i}\int_\Omega \varrho_i\omega_i^\delta \;\dd x \geq -2\frac{1}{m_3}\int_\Omega \varrho_3\omega_3^\delta \;\dd x. \]
    Therefore analogously we get the bound

\begin{equation}\label{energy2} \begin{aligned}
\mu\int_0^T\int_\Omega |\nabla u|^2\dd x\dd t +& \sup_{t\in [0,T]}\sum_{i=1}^N\frac{1}{m_i}\int_\Omega\varrho_{i}^2\dd x + \sum_{i=1}^N\int_0^T\int_\Omega \frac{|F_{i}^\delta|^2}{\tilde\varrho_{i}^\delta}\dd x\dd t \\ 
+ 2\delta\sum_{i=1}^N\frac{1}{m_i}\int_0^T\int_\Omega \varrho^{\beta-2}\varrho_{i}^2\dd x\dd t
&+ \varepsilon\sum_{i=1}^N\frac{2}{m_i}\int_0^T\int_\Omega |\nabla\varrho_{i}|^2\dd x\dd t \\
& \leq \sum_{i=1}^N\frac{1}{m_i}\int_\Omega\varrho_{0,i,\varepsilon}^2\dd x + C\|\varrho_3\|_{L^\infty((0,T)\times\Omega)}\int_0^T\int_\Omega \omega_3^\delta\dd x. 
\end{aligned}\end{equation}
As 
\[ \int_\Omega \varrho(t,x) \dd x + \delta\int_0^t\int_\Omega \varrho^{\beta-1}\dd x = \int_\Omega \varrho_{0,\varepsilon}\dd x, \]
we also have
 \begin{equation}\label{omega_est} \int_0^T\int_\Omega \omega_3^\delta\dd x\dd t = \int_\Omega\varrho_i(T,\cdot)\dd x - \int_\Omega \varrho_{i,0,\varepsilon}\dd x + \delta\int_0^T\int_\Omega \varrho^{\beta-2}\varrho_i\dd \dd s \leq \int_\Omega\varrho_{0,\varepsilon}\dd x \leq \int_\Omega \varrho_0\dd x. \end{equation}

Now we repeat Step IV of the proof of Lemma \ref{oszac1}. If $\|\varrho\|_{L^\infty((0,T)\times\Omega)}\leq \|\varrho_0\|_{L^\infty(\Omega)}+1$, then the assertion follows. If $\|\varrho\|_{L^\infty((0,T)\times\Omega)}>\|\varrho_0\|_{L^\infty(\Omega)}+1$, then let
$k>\|\varrho\|_{L^\infty((0,T)\times\Omega)}-1$. We test the equation on $\varrho$ by $(\varrho-k)_+$. We have
\[\begin{aligned} \frac{1}{2}\int_\Omega (\varrho-k)_+^2(t,x)\dd x - \frac{1}{2}\int_\Omega (\varrho_0-k)_+^2\dd x \\
+ \varepsilon\int_0^t\int_\Omega |\nabla(\varrho-k)_+|^2\dd x\dd s +\delta\int_0^t\int_\Omega \varrho^{\beta-1}(\varrho-k)_+\dd x\dd s &= \int_0^t\int_\Omega \varrho u\cdot\nabla(\varrho-k)_+\dd x\dd s \\
&= -\int_0^t\int_\Omega \ddiv u\left(\frac{1}{2}(\varrho-k)_+^2 + k(\varrho-k)_+\right)\dd x\dd s.
\end{aligned}\]
Therefore using the fact that $\varrho_0<k$, we obtain
\[ \int_\Omega (\varrho-k)_+^2\dd x \leq -\int_0^t\int_\Omega (\varrho-k)_+(\varrho+k)\ddiv u\dd x\dd s. \]

Combining (\ref{energy2}) and (\ref{omega_est}), we obtain the estimate
\[ \|p\|_{L^\infty(0,T;L^1)} \leq C+C\|\varrho\|_{L^\infty((0,T)\times\Omega)}. \]
Then we proceed in the same way as in the proof of Lemma \ref{divu_positive} to get $\div u\mathbbm{1}_{\varrho>k}> 0$ for $k$ sufficiently large and in consequence
\[ \|\varrho\|_{L^\infty((0,T)\times\Omega)}\leq C,\]
where $C$ is the absolute constant independent from $\varepsilon$ and $\delta$, given in terms of the $L^\infty$
norm of $\rho_0$.

\subsection{Limit passage with $\delta\to 0$.}
Having the uniform $L^\infty$ bounds on $\varrho$ and nonnegativity, we know that for sufficiently small $\delta$
\[ \omega_i^\delta = \omega_i, \quad \varrho_i^\delta=\tilde\varrho_i^\delta=\varrho_i \quad \text{and} \quad F_i^\delta=F_i, \quad i=1,\dots,N. \]
Moreover, using (\ref{energy2}) we get the estimates
\[ \|u\|_{L^2(0,T;W^{1,2})}^2 + \sum_{i=1}^N\left(\varepsilon\|\nabla\varrho_{i}\|_{L^2((0,T)\times\Omega)}^2+\delta\|\varrho_{i}\|_{L^\beta((0,T)\times\Omega)}^\beta + \left\|\frac{1}{\sqrt{\varrho_i}}F_i\right\|_{L^2((0,T)\times\Omega)}\right)
\leq C, \]
where $C$ does not depend on $\varepsilon$ and $\delta$. In consequence, we also get
\[ \|\partial_t\varrho_i\|_{L^2(0,T;H^{-1})} \leq C. \]
Therefore, if $(\vec\varrho_{\varepsilon,\delta}, u_{\varepsilon,\delta})$ is a solution to (\ref{approx2}), then up to a subsequence
\[ \varrho_{1,\varepsilon,\delta},\dots,\varrho_{N,\varepsilon,\delta}, u_{\varepsilon,\delta} \rightharpoonup \varrho_{1,\varepsilon},\dots,\varrho_{N,\varepsilon}, u_\varepsilon \quad \text{in} \quad L^2(0,T;W^{1,2}) \]
and from the Aubin--Lions lemma
\[ \varrho_{i,\varepsilon,\delta} \to \varrho_{i,\varepsilon} \quad \text{in} \quad L^2((0,T)\times\Omega). \]
Thus passing to the limit in the weak formulation (note that the terms $\int_0^T\int_\Omega \varrho_{\varepsilon,\delta}^{\beta-2}\varrho_{i,\varepsilon,\delta}\varphi \;\dd x\dd t$ for $\varphi\in C_0^\infty((0,T)\times\Omega)$ are bounded independently of $\delta$) we obtain the weak solution to the system (\ref{approx}).

\section{End of the proof of Theorem \ref{positive_th}}\label{limit_sect}

In this section we perform the limit passage $\varepsilon\to 0$ with the additional assumption that all densities are bounded away from zero. First, note that we can rewrite $F_i$ in a following way:

\[ F_i = \sum_{j=1}^N C_{i,j}\nabla\frac{1}{m_j}\varrho_j^{\gamma_j} = \sum_{j=1}^N\tilde C_{i,j}\frac{\gamma_j}{\gamma_j-1}\frac{1}{m_j}\nabla\varrho_j^{\gamma_j-1}, \]
where
\[ \tilde C_{i,j} = \left\{ \begin{aligned}
    \frac{\varrho_i}{\varrho}\sum_{k\neq i}\varrho_k, \quad i=j, \\
    -\frac{\varrho_i\varrho_j}{\varrho}, \quad i\neq j
\end{aligned}\right.\]
is symmetric. The following transformation is also performed in a more general setting in Chapter 7 in \cite{giovangigli} and in \cite{DDGG} for a system with different pressure law.

Using the above relation we can write $F_1,\dots,F_{N-1}$ as a combination of $\nabla q_1,\dots,\nabla q_{N-1}$ for 
\[ q_i = \frac{\gamma_i}{\gamma_i-1}\frac{1}{m_i}\varrho_i^{\gamma_i-1}-\frac{\gamma_{i+1}}{\gamma_{i+1}-1}\frac{1}{m_{i+1}}\varrho_{i+1}^{\gamma_{i+1}-1}, \]
namely
$\displaystyle F_i = \sum_{j=1}^{N-1} b_{i,j}\nabla q_j $
with
\[ b_{i,j} = \left\{\begin{aligned}
    -\frac{\varrho_i}{\varrho}\sum_{k=1}^j\varrho_k, \quad j<i, \\
    \frac{\varrho_i}{\varrho}\sum_{k=j+1}^N\varrho_k, \quad j\geq i
\end{aligned}\right. \]
By performing elementary operations on the matrix $B=(b_{i,j})_{i,j=1,\dots,N-1}$, it is easy to see that $\det B = \frac{1}{\varrho}\varrho_1\dots\varrho_N$. Indeed, adding the verses $2,\dots,N-1$ to the first one and dividing it by $\frac{\varrho_N}{\varrho}$, we get the matrix
\[\begin{bmatrix}
    \varrho_1 & \varrho_1+\varrho_2 & \dots & \dots & \sum_{k=1}^{N-1}\varrho_k \\
    -\frac{\varrho_1\varrho_2}{\varrho} & \frac{\varrho_2}{\varrho}\sum_{k=3}^N\varrho_k & \dots & \dots & \frac{\varrho_2}{\varrho}\varrho_N \\
    -\frac{\varrho_1\varrho_3}{\varrho} & -\frac{\varrho_3}{\varrho}\sum_{k=1}^2\varrho_k & \dots & \dots & \frac{\varrho_3}{\varrho}\varrho_N \\
    \vdots & \vdots & \dots & \dots & \vdots \\
    -\frac{\varrho_1\varrho_{N-1}}{\varrho} & -\frac{\varrho_{N-1}}{\varrho}\sum_{k=1}^2\varrho_k & \dots & \dots & \frac{\varrho_{N-1}}{\varrho}\varrho_N
\end{bmatrix}\]
Now to the $i$-th verse we add the first one multiplied by $\frac{\varrho_i}{\varrho}$. After that operation, we obtain the triangular matrix with $\varrho_1,\varrho_2,\dots,\varrho_{N-1}$ on the diagonal. In conclusion
\[ \det B = \frac{\varrho_N}{\varrho}\cdot\varrho_1\dots\varrho_{N-1} 
\text{ \ \ as we claimed.} \]
In consequence,  matrix $B$ is invertible as $\varrho_1\dots\varrho_N\neq 0$ and if $\varrho_1\dots\varrho_N\geq c$ for some $c>0$, then 
\[ \sum_{i=1}^{N-1}|\nabla q_i|^2 \leq C\sum_{i=1}^{N-1}|F_i|^2 \]
for some constant $C$ depending on $c$ and $B^{-1}$. In particular, from the $L^2$ bound on $F_i$ we get
\[ \sum_{i=1}^{N-1}\|\nabla q_i\|_{L^2((0,T)\times\Omega)} \leq C. \]

Now, let us show the Lipschitz estimate on $\varrho_1,\dots,\varrho_N$ with respect to $q_1,\dots,q_{N-1},\varrho$. Let 
\[ G(z_1,\dots,z_N) = \left[\begin{gathered}
\frac{\gamma_1}{\gamma_1-1}\frac{1}{m_1}z_1^{\gamma_1-1} - \frac{\gamma_2}{\gamma_2-1}\frac{1}{m_2}z_2^{\gamma_2-1} \\
\frac{\gamma_2}{\gamma_2-1}\frac{1}{m_2}z_2^{\gamma_2-1} - \frac{\gamma_3}{\gamma_3-1}\frac{1}{m_3}z_3^{\gamma_3-1} \\
\vdots \\
\frac{\gamma_{N-1}}{\gamma_{N-1}-1}\frac{1}{m_{N-1}}z_{N-1}^{\gamma_{N-1}-1} - \frac{\gamma_N}{\gamma_N-1}\frac{1}{m_N}z_N^{\gamma_N-1} \\
z_1 + \dots + z_N
\end{gathered}\right]. \]
Then
\[ (\vec q, \varrho) = G(\vec\varrho\,). \]
We will now prove that $G$ is invertible. First, let us show two auxillary results:

\begin{prop}\label{det}
    $\det DG(\vec z)\neq 0$ for any $\vec z \in \mathcal{U} :=\{\vec z\in\mathbb{R}^N: z_1,\dots,z_N > 0\}$.
\end{prop}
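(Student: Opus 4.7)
The plan is to compute the Jacobian $DG(\vec z)$ explicitly and then verify that its kernel is trivial, which is equivalent to non-vanishing of the determinant. Writing out partial derivatives, for $i=1,\dots,N-1$ the $i$-th row of $DG$ has only two nonzero entries, namely $\partial_{z_i}G_i=\frac{\gamma_i}{m_i}z_i^{\gamma_i-2}$ and $\partial_{z_{i+1}}G_i=-\frac{\gamma_{i+1}}{m_{i+1}}z_{i+1}^{\gamma_{i+1}-2}$, while the $N$-th row is $(1,1,\dots,1)$. Setting $a_i:=\frac{\gamma_i}{m_i}z_i^{\gamma_i-2}$, which is strictly positive on $\mathcal{U}$ because $z_i>0$ and $\gamma_i>1$, I obtain the bidiagonal-plus-ones structure
\[
DG(\vec z)=\begin{pmatrix} a_1 & -a_2 & 0 & \cdots & 0 \\ 0 & a_2 & -a_3 & \cdots & 0 \\ \vdots & & \ddots & \ddots & \vdots \\ 0 & 0 & \cdots & a_{N-1} & -a_N \\ 1 & 1 & 1 & \cdots & 1 \end{pmatrix}.
\]

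Next, I would show that this matrix has trivial kernel. Suppose $DG(\vec z)\vec v=0$ for some $\vec v=(v_1,\dots,v_N)\in\mathbb{R}^N$. The first $N-1$ rows give the recursion $a_iv_i=a_{i+1}v_{i+1}$ for $i=1,\dots,N-1$, so $v_i=(a_1/a_i)v_1$ for all $i$. Substituting this into the last equation $v_1+\cdots+v_N=0$ yields $a_1v_1\sum_{i=1}^N \frac{1}{a_i}=0$. Since every $a_i$ is strictly positive on $\mathcal{U}$, the sum $\sum_{i=1}^N 1/a_i$ is strictly positive, forcing $v_1=0$ and therefore $\vec v=0$.

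Hence $DG(\vec z)$ is injective on $\mathcal{U}$, so $\det DG(\vec z)\neq 0$ for every $\vec z\in\mathcal{U}$, which is the claim. No essential obstacle is expected here: the argument is a direct linear-algebra computation, and the only point to keep in mind is that the exponent $\gamma_i-2$ may be negative, but this is harmless because $z_i>0$ guarantees $z_i^{\gamma_i-2}>0$ and hence $a_i>0$. In fact the same reasoning gives the explicit formula $\det DG(\vec z)=a_1a_2\cdots a_N\sum_{i=1}^N\frac{1}{a_i}$ (obtainable by expanding along the last row, or from the kernel computation above), which makes the strict positivity of the determinant on $\mathcal{U}$ transparent.
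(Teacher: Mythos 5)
Your proof is correct and follows essentially the same route as the paper: both compute the same bidiagonal-plus-ones Jacobian with $a_i=\frac{\gamma_i}{m_i}z_i^{\gamma_i-2}>0$ and reduce the claim to elementary linear algebra yielding $\det DG(\vec z)=\sum_{i=1}^N\prod_{j\neq i}a_j>0$. The only cosmetic difference is that you argue via triviality of the kernel while the paper performs row operations to reach a triangular form, but the content is identical.
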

\begin{proof}
    We have
    \[ DG(\vec z\,) = \begin{bmatrix}
        a_1 & -a_2 & 0 & \dots & 0 \\
        0 & a_2 & -a_3 & \dots & 0 \\
        \vdots & \dots & \ddots & \dots & \vdots \\
        0 & \dots & 0 & a_{N-1} & -a_N \\
        1 & \dots & \dots & \dots & 1
    \end{bmatrix}\]
    for $a_i = \frac{\gamma_i}{m_i}z_i^{\gamma_i-2}$. By elementary operations, we can transform this matrix into
    \[\begin{bmatrix}
        a_1 & -a_2 & 0 & \dots & 0 \\
        0 & a_2 & -a_3 & \dots & 0 \\
        \vdots & \dots & \ddots & \dots & \vdots \\
        0 & \dots & 0 & a_{N-1} & -a_N \\
        0 & \dots & \dots & 0 & 1+a_N\sum_{j=1}^{N-1}\frac{1}{a_j}
    \end{bmatrix}\]
    Therefore 
    \[ \det DG(\vec z) = \sum_{i=1}^N\prod_{j\neq i}a_j >0 
    \text{ \ \ \ \ \ \ for \ \ $z_1,\dots,z_N>0$} \]
\end{proof}

 In the next Proposition we analyze the codomain of $G$. For simplicity we assume that $\frac{\gamma_i}{\gamma_i-1}\frac{1}{m_i}=1$, otherwise we can additionally rescale the variables to get the same result. For clarity, below we present the precise formulation for only $3$ components, however we can proceed analogously for arbitrary $N$.

\begin{prop}[For $N=3$]
    $G(\mathcal{U})\subseteq \mathcal{V}:=\{(s_1,s_2,s_3): s_3>g(s_1,s_2)\}$, with 
    \[ g(s_1,s_2) = \left\{\begin{aligned}
        (-s_1)^{1/\alpha_2}+(-s_1-s_2)^{1/\alpha_3}, &\quad (s_1,s_2)\in I_1 \\
        s_1^{1/\alpha_1}+(-s_2)^{1/\alpha_3}, &\quad (s_1,s_2)\in I_2 \\
        (s_1+s_2)^{1/\alpha_1}+s_2^{1/\alpha_2}, &\quad (s_1,s_2)\in I_3
    \end{aligned}\right. \]
    for $\alpha_i=\gamma_i-1$ and
    \[ \begin{aligned}
        I_1 &= \{s_1<0, \; s_1+s_2<0 \}, \\
        I_2 &= \{s_1>0, \; s_2<0 \}, \\
        I_3 &= \{s_2>0, \; s_1+s_2>0. \}
    \end{aligned}\]
\end{prop}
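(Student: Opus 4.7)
\medskip
\noindent\textbf{Proof sketch.} Fix $\vec z = (z_1, z_2, z_3) \in \mathcal{U}$ and set $(s_1, s_2, s_3) = G(\vec z\,)$, so that
\[ s_1 = z_1^{\alpha_1} - z_2^{\alpha_2}, \qquad s_2 = z_2^{\alpha_2} - z_3^{\alpha_3}, \qquad s_1 + s_2 = z_1^{\alpha_1} - z_3^{\alpha_3}, \qquad s_3 = z_1 + z_2 + z_3. \]
The first step is to reinterpret the regions $I_1, I_2, I_3$ directly in terms of $\vec z\,$: a direct inspection of the defining inequalities shows that $(s_1, s_2) \in I_k$ exactly when $z_k^{\alpha_k} < z_j^{\alpha_j}$ for both indices $j \neq k$, i.e.\ when $z_k^{\alpha_k}$ strictly realises $\min_i z_i^{\alpha_i}$. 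Hence, apart from the measure-zero loci in $\mathcal{U}$ where two of the quantities $z_1^{\alpha_1}, z_2^{\alpha_2}, z_3^{\alpha_3}$ coincide, every $\vec z \in \mathcal{U}$ is mapped by $G$ into one of the three regions.

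The inequality $s_3 > g(s_1, s_2)$ is then verified region by region using only the elementary observation that $(a - b)^{1/\alpha} < a^{1/\alpha}$ whenever $a > b > 0$ and $\alpha > 0$. For example, if $(s_1, s_2) \in I_1$ (so $z_1^{\alpha_1}$ is the strict minimum), the formula for $g$ becomes
\[ g(s_1, s_2) = (z_2^{\alpha_2} - z_1^{\alpha_1})^{1/\alpha_2} + (z_3^{\alpha_3} - z_1^{\alpha_1})^{1/\alpha_3} < z_2 + z_3 < z_1 + z_2 + z_3 = s_3, \]
both strict inequalities being guaranteed by $z_1 > 0$. The cases $(s_1, s_2) \in I_2$ and $(s_1, s_2) \in I_3$ are fully analogous: after substituting the definitions, each of the two summands in $g$ takes the form $(z_j^{\alpha_j} - z_k^{\alpha_k})^{1/\alpha_j}$ with $z_j^{\alpha_j} > z_k^{\alpha_k} > 0$ and is therefore bounded by $z_j$, while the third component $z_k$ (precisely the one at which the minimum of $z_i^{\alpha_i}$ is attained) remains in $s_3$ as a strictly positive slack.

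The boundary loci where two of the $z_i^{\alpha_i}$ agree correspond to $(s_1, s_2)$ sitting on the common boundary of two of the sets $I_k$; a brief direct check (e.g.\ on $\{s_1 = 0,\ s_2<0\}$ both the $I_1$-formula and the $I_2$-formula reduce to $(-s_2)^{1/\alpha_3}$, and similarly on the other two boundaries) shows that the three pieces of $g$ agree pairwise on their overlaps, so $g$ extends continuously to the closure and the estimate carries over. There is no serious obstacle; the main organisational point is the identification $(s_1, s_2)\in I_k \iff z_k^{\alpha_k}=\min_i z_i^{\alpha_i}$, which explains the otherwise mysterious piecewise formula for $g$ and makes the three case computations essentially identical.
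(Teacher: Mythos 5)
Your proof is correct, and it takes a route that differs in an instructive way from the paper's. The paper argues by restricting $G$ to the boundary $\partial\mathcal{U}$: it computes, e.g., $G(0,z_2,z_3)$, parametrises the image, and identifies $G(\partial\mathcal{U})$ as exactly the graph of $g$ (with the face $\{z_i=0\}$ producing the formula on $I_i$), leaving the passage from this boundary description to the inclusion $G(\mathcal{U})\subseteq\mathcal{V}$ essentially implicit (it rests on the behaviour of $G$ near $\partial\mathcal{U}$ together with continuity/properness). You instead prove the inclusion pointwise in the interior: the observation that $(s_1,s_2)\in I_k$ precisely when $z_k^{\alpha_k}$ is the strict minimum of the $z_i^{\alpha_i}$ reduces each case to the elementary bound $(a-b)^{1/\alpha}<a^{1/\alpha}$ for $a>b>0$, giving $g(s_1,s_2)<s_3$ with the minimising coordinate $z_k>0$ as strict slack, and you also handle the loci where two of the $z_i^{\alpha_i}$ coincide by checking that the branches of $g$ agree there. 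What your argument buys is a self-contained, purely algebraic verification of $G(\mathcal{U})\subseteq\mathcal{V}$ that needs no topological step and also clarifies that $g$ extends continuously across the seams $\{s_1=0,\,s_2\le 0\}$, $\{s_2=0,\,s_1\ge 0\}$, $\{s_1+s_2=0,\,s_1\le 0\}$ (a point the statement glosses over); what the paper's computation buys is the sharper geometric fact that the graph of $g$ is precisely $G(\partial\mathcal{U})$, which both motivates the definition of $\mathcal{V}$ and feeds directly into the properness of $G\colon\mathcal{U}\to\mathcal{V}$ used in the subsequent Hadamard argument.
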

\begin{proof}
    It is enough to examine the behavior of $G$ at $\partial\mathcal{U}$. We have
    \[ G(0,z_2,z_3) = \begin{bmatrix}
        -z_2^{\alpha_2}, \\
        z_2^{\alpha_2}-z_3^{\alpha_3}, \\
        z_2+z_3
    \end{bmatrix}.\]
    Therefore using the parametrisation $s_1=-z_2^{\alpha_2}$, $s_2=z_2^{\alpha_2}-z_3^{\alpha_3}$ we see that 
    \[ z_2+z_3 = |s_1|^{1/\alpha_2}+(|s_1|-s_2)^{1/\alpha_3}. \]
    Moreover, since $z_2,z_3\geq 0$, we have $(s_1,s_2)\in I_1$. Proceeding analogously for the remaining components of $\partial\mathcal{U}$, we get that $G(\partial\mathcal{U})$ is a graph of a function $g$.
\end{proof}
Note that in the case of arbitrary $N$ analogous calculations show that the function $g$ has a form 
\[ g(s_1,\dots,s_{N-1}) = \sum_{j=1}^{i-2}\left(\sum_{l=j}^{i-1}s_l\right)^{1/\alpha_j} + s_{i-1}^{1/\alpha_{i-1}}+(-s_i)^{1/\alpha_i} + \sum_{j=i+2}^N\left(-\sum_{l=i}^{j-1}s_l\right)^{1/\alpha_j} \]
for $(s_1,\dots,s_{N-1})\in I_i$, where
\[ I_i = \left\{\sum_{l=j}^{i-1}s_l \geq 0, \; j=1,\dots,i-1, \quad \text{and} \quad \sum_{l=i}^k s_l \leq 0, \; k=i,\dots,N-1\right\}. \]

The above propositions allows us to conclude invertibility of $G$. By Proposition \ref{det} $G$ is a local diffeomorphism. Then, since $\mathcal{V}$ is simply connected and $G\colon\mathcal{U}\to\mathcal{V}$ is proper, by the Hadamard's theorem (see e.g. Theorem 6.2.8 in \cite{implicit_book}), $G$ is invertible.

Since $G$ is a diffeomorphism, we know that
\[ \vec\varrho = G^{-1}(\vec q,\varrho) \]
is locally Lipschitz. Therefore in particular
\begin{equation}\label{p_lip} |p(\vec\varrho(t,x))-p(\vec\varrho(t,y))| \leq C|\varrho(t,x)-\varrho(t,y)| + C\sum_{j=1}^{N-1}|q_j(t,x)-q_j(t,y)|. \end{equation}

Having the above estimates, we will show the compactness of the total density $\varrho$. We use the following version of the compactness criterion from \cite{bresch-jabin,BJ2}:

\begin{prop}\label{BJ}
Let $\varrho_k$ be a sequence uniformly bounded in $L^p((0,T)\times\Omega)$ for $1\leq p<\infty$. Assume that $\{K_h\}_{h>0}$ is a family of positive, bounded functions on $\mathbb{R}^3$, satisfying:
\begin{itemize}
    \item $\displaystyle \forall_{\eta>0} \quad \sup_{h>0}\int_{\mathbb{R}^3} K_h(x)\mathbbm{1}_{\{|x|>\eta\}}\dd x <\infty$,
    \item $\|K_h\|_{L^1}\to \infty$ as $h\to 0$.
    \end{itemize}
    If $\partial_t\varrho_k$ is uniformly bounded in $L^q(0,T;W^{-1,p})$ for some $q>1$ and
    \[ \limsup_k\int_0^T\frac{1}{\|K_h\|_1}\iint_{\Omega\times\Omega} K_h(x-y)|\varrho_k(t,x)-\varrho_k(t,y)|^p\; \dd x\dd y\dd t \to 0 \quad \text{as} \quad h\to 0, \]
    then $\{\varrho_k\}_{k\in\mathbb{N}}$ is compact in $L^p((0,T)\times\Omega)$. Conversely, if $\{\varrho_k\}_{k\in\mathbb{N}}$ is compact in $L^p((0,T)\times\Omega)$, then the above quantity converges to $0$.
\end{prop}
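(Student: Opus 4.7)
The plan is to invoke the Kolmogorov--Riesz--Fréchet compactness theorem, which requires uniform equicontinuity of $\{\varrho_k\}$ under space and time translations in $L^p$, together with the given uniform $L^p$ bound. The core idea is to compare $\varrho_k$ with a mollified version $\rho_{k,h}:=\sigma_h*_x\varrho_k$, where $\sigma_h:=K_h/\|K_h\|_1$ is a probability density on $\mathbb{R}^3$, and then argue in two steps: (i) $\varrho_k-\rho_{k,h}$ is small in $L^p$ uniformly in $k$ as $h\to 0$; (ii) for each fixed $h$, the regularized family $\{\rho_{k,h}\}_k$ is compact in $L^p$.

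For step (i), the starting point is Jensen's inequality applied to the convex function $|\cdot|^p$ with the probability measure $\sigma_h$: for $x\in\Omega$, after extending $\varrho_k$ by zero outside $\Omega$,
\begin{equation*}
|\varrho_k(t,x)-\rho_{k,h}(t,x)|^p \;\leq\; \frac{1}{\|K_h\|_1}\int_{\mathbb{R}^3} K_h(x-y)\,|\varrho_k(t,x)-\varrho_k(t,y)|^p\,\dd y.
\end{equation*}
Integrating in $(t,x)$ and using the stated hypothesis on $K_h$ yields $\limsup_k\|\varrho_k-\rho_{k,h}\|_{L^p((0,T)\times\Omega)}\to 0$ as $h\to 0$; the contribution to the right-hand side from $y\notin\Omega$ is absorbed using $\sup_h\int_{|z|>\eta}K_h<\infty$ and $\|K_h\|_1\to\infty$, with a standard argument splitting $\Omega$ into an interior and a thin boundary strip.

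For step (ii), fix $h>0$. Spatial equicontinuity follows from Young's inequality,
\begin{equation*}
\|\rho_{k,h}(\cdot,\cdot+\xi)-\rho_{k,h}\|_{L^p}\;\leq\;\|\sigma_h(\cdot+\xi)-\sigma_h\|_{L^1}\,\|\varrho_k\|_{L^p(\Omega)},
\end{equation*}
and the right-hand side tends to zero as $|\xi|\to 0$ by the $L^1$-continuity of translations applied to $K_h\in L^1$. Temporal equicontinuity follows from $\partial_t\rho_{k,h}=\sigma_h*_x\partial_t\varrho_k$: since $K_h$ is bounded, $\sigma_h$ lies in $L^{p'}$, and convolution against $\sigma_h$ maps $W^{-1,p}(\Omega)$ into $L^p(\Omega)$ with constant depending on $h$, so $\partial_t\rho_{k,h}$ is uniformly bounded in $L^q(0,T;L^p(\Omega))$. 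This is enough for temporal equicontinuity of $\rho_{k,h}$ in $L^p$. A standard diagonal extraction combining (i) and (ii) provides an $L^p$-convergent subsequence of $\{\varrho_k\}$.

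The converse direction is an exercise in splitting. Given $\{\varrho_k\}$ compact in $L^p$, write
\begin{equation*}
\frac{1}{\|K_h\|_1}\int_{\mathbb{R}^3} K_h(z)\,\|\varrho_k(\cdot+z)-\varrho_k\|_{L^p}^p\,\dd z
\end{equation*}
as the sum of integrals over $|z|<\eta$ and $|z|\geq\eta$: compactness (hence equicontinuity) controls the near-zero part uniformly in $k$ as $\eta\to 0$, while the tail is dominated by $C\sup_h\int_{|z|\geq\eta}K_h$ divided by $\|K_h\|_1\to\infty$. The subtle points I expect are the handling of boundary effects when $x$ is close to $\partial\Omega$ (resolved by the decay of the $\sigma_h$-mass outside any fixed ball) and the lifting of the $W^{-1,p}$ regularity of $\partial_t\varrho_k$ by convolution with the non-smooth kernel $\sigma_h$; both are ultimately secured by the two structural hypotheses on $K_h$.
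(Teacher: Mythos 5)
The paper does not actually prove this proposition: it is quoted verbatim from the cited works of Bresch and Jabin, so the only comparison available is with the strategy behind that citation, which is essentially the one you follow (Jensen against the normalized kernel $\bar K_h=K_h/\|K_h\|_1$, compactness of the mollified family for fixed $h$, Kolmogorov--Riesz plus a diagonal argument, and the standard splitting for the converse). As written, however, your argument has two genuine gaps. The first is the boundary step in (i). After extending $\varrho_k$ by zero, the error coming from $y\notin\Omega$ is $\int_0^T\int_\Omega|\varrho_k(t,x)|^p\,\|K_h\|_1^{-1}\bigl(\int_{\mathbb{R}^3\setminus\Omega}K_h(x-y)\,\dd y\bigr)\dd x\,\dd t$. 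For $x$ at distance at least $\eta$ from $\partial\Omega$ your tail bound works, but for $x$ in the strip $\{d(x,\partial\Omega)<\eta\}$ the inner integral can be comparable to $\|K_h\|_1$ (the divergent part of the kernel mass sits near the origin, and a fixed fraction of it can lie outside $\Omega$), so this contribution is only controlled by $\sup_k\int_0^T\int_{\{d(x,\partial\Omega)<\eta\}}|\varrho_k|^p\,\dd x\,\dd t$, which a uniform $L^p$ bound does not make small; the decay of the kernel outside fixed balls, which you invoke, is irrelevant for this term. Closing it requires either showing that the hypothesis itself forbids concentration of $|\varrho_k|^p$ near $\partial\Omega$, or replacing $\bar K_h\ast\varrho_k$ by the $\Omega$-normalized average $\int_\Omega K_h(x-y)\varrho_k(y)\,\dd y\big/\int_\Omega K_h(x-y)\,\dd y$ together with a cone-condition lower bound $\int_\Omega K_h(x-y)\,\dd y\geq c\|K_h\|_1$; the latter uses more structure of $K_h$ (e.g.\ radial monotonicity, true for the kernel actually used in Section 4, as in Bresch--Jabin, who moreover work on the torus, where the issue is absent) than the two bullet assumptions.

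The second gap is in the temporal part of (ii): the claim that $\sigma_h\in L^{p'}$ makes convolution with $\sigma_h$ map $W^{-1,p}$ into $L^p$ is false. Pairing $\partial_t\varrho_k\in W^{-1,p}$ against $\sigma_h(x-\cdot)$ costs one derivative of the kernel (write $w=\mathrm{div}\,W$ with $W\in L^p$; then $\sigma_h\ast w=\nabla\sigma_h\ast W$), and boundedness of $K_h$ provides none. This is repairable: either note that the kernels used in the application are $C^\infty$, or insert a second, smooth mollification at a scale $\delta$, use the spatial equicontinuity of $\rho_{k,h}$ you already established to compare, and diagonalize in $(h,\delta)$. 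Your converse direction is fine at the level of detail given. With these two repairs the proof would indeed reproduce the cited result.
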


Following \cite{bresch-jabin}, we choose the functions $K_h$ in the following way: \\
Let $K_h\colon \mathbb{R}^3\to\mathbb{R}$ be given by
\[ K_h(x) = \frac{1}{(|x|+h)^3} \quad \text{for} \quad |x|\leq\frac{1}{2} \]
 and let $K_h$ be independent of $h$ for $|x|>\frac{2}{3}$, zero outside the ball $B(0,\frac{3}{4})$ and such that $K_h\in C^\infty(\mathbb{R}^3)$. Denote also $\overline{K}_h=\frac{K_h}{\|K_h\|_1}$. An important property of $K_h$ is that $\|K_h\|_1 \sim |\log h|$.
To use the Proposition \ref{BJ}, we will estimate the quantity
\[ \mathcal{R}_h(t) = \frac{1}{\|K_h\|_1}\iint_{\Omega\times\Omega} K_h(x-y)|\varrho_{k}(t,x)-\varrho_{k}(t,y)|^2\dd x\dd y. \]
We need to estimate $\frac{\dd}{\dd t}\mathcal{R}_h(t)$. In the following calculations we drop the index $k$ where it does not raise confusion, and use the notation $f^x:= f(t,x)$. 

From the continuity equation, we get
\[\begin{aligned} \partial_t(\varrho^x-\varrho^y)^2 =& 2(\varrho^x-\varrho^y)\big(-\ddiv(\varrho^x u^x)+\ddiv(\varrho^y u^y) + \varepsilon\Delta\varrho^x-\varepsilon\Delta\varrho^y\big) \\
=& -\ddiv_x(u^x(\varrho^x-\varrho^y)^2) - \ddiv_y(u^y(\varrho_i^x-\varrho^y)^2) - (\ddiv u^x-\ddiv u^y)(\varrho^x+\varrho^y)(\varrho^x-\varrho^y) \\
&+ 2\varepsilon(\varrho^x-\varrho^y)(\Delta\varrho^x-\Delta\varrho^y).
\end{aligned}\]

Therefore

\[\begin{aligned}
\frac{\dd}{\dd t}\mathcal{R}_h(t) =& \frac{2}{\|K_h\|_1}\iint_{\Omega\times\Omega} \nabla K_h(x-y)(u^x-u^y)|\varrho^x-\varrho^y|^2 \dd x\dd y \\
& - 2\iint_{\Omega\times\Omega} \overline{K}_h(x-y)(\ddiv u^x-\ddiv u^y)(\varrho^x-\varrho^y)\varrho^x\dd x\dd y \\
& - 2\varepsilon\iint_{\Omega\times\Omega}\overline{K}_h(x-y)|\nabla\varrho^x-\nabla\varrho^y|^2 \dd x\dd y \\
=& A_1 + A_2 + A_3.
\end{aligned}\]

The last term has a good sign, so we just estimate it by $0$. The first term is estimated in the same way as in \cite{bresch-jabin}. By the definition of $K_h$, we have
\[ |\nabla K_h(z)|\leq \frac{CK_h(z)}{|z|}. \]
Using the inequality
\[ |f(x)-f(y)|\leq C|x-y|(M|\nabla f|(x) + M|\nabla f|(y)), \]
where $M$ denotes the maximal function, we arrive at
\[ A_1 \leq C\iint_{\Omega\times\Omega} \overline{K}_h(x-y)(M|\nabla u|^x + M|\nabla u|^y)|\varrho^x-\varrho^y|^2\dd x\dd y \]
and in consequence
\[ A_1 \leq C\iint_{\Omega\times\Omega} \overline{K}_h(x-y)M|\nabla u|^x|\varrho^x-\varrho^y|^2\dd x\dd y.\]

For the term $A_2$, we use the relation (\ref{d}) to write get
 \[ |\ddiv u^x-\ddiv u^y| \leq |d^x-d^y| + |p(\vec\varrho\,^x)-p(\vec\varrho\,^y)|. \]
Therefore by the Cauchy inequality
\[\begin{aligned}
A_2 &\leq C\iint_{\Omega\times\Omega}\overline{K}_h(x-y)\left(|d^x-d^y| + |p^x-p^y|\right)|\varrho^x-\varrho^y|\dd x\dd y \\
&\leq C\mathcal{R}_h(t) + C\iint_{\Omega\times\Omega}\overline{K}_h(x-y)|d^x-d^y|^2\;\dd x + C\iint_{\Omega\times\Omega}\overline{K}_h(x-y)|p^x-p^y|^2\;\dd x.
\end{aligned} \]

From (\ref{d_est}) we know that 
\[ \|\nabla d\|_{L^\infty(0,T;L^2)}\leq C. \]
Denoting by $Ed$ an $H^1(\mathbb{R}^3)$ extension of $d$, we have
\[\begin{aligned} \iint_{\Omega\times\Omega} \overline{K}_h(x-y)|d^x-d^y|^2\dd x\dd y &\leq \iint_{\mathbb{R}^3\times\mathbb{R}^3} \overline{K}_h(z)|Ed(t,y+z)-Ed(t,y)|^2\dd y\dd z \\
& \leq \int_0^1\int_{\mathbb{R}^3} \overline{K}_h(z)|z|^2\int_{\mathbb{R}^3} |\nabla Ed(t,y+sz)|^2\dd y\dd z\dd s \\
& \leq \|\nabla d\|_{L^\infty(0,T;L^2)}^2\frac{1}{|\log h|}\int_{\mathbb{R}^3} K_h(z)|z|^2\dd z.
\end{aligned} \]

In conclusion, since $\displaystyle\sup_{h}\int_{\mathbb{R}^3}K_h(z)|z|^2\dd z\leq C$, we get
\[ A_2 \leq C\mathcal{R}_h(t) + \frac{C}{|\log h|} + C\iint_{\Omega\times\Omega}\overline{K}_h(x-y)|p^x-p^y|^2\;\dd x\dd y. \]
Using (\ref{p_lip}) and Young inequality, we can further estimate it by
\[ A_2 \leq C\sum_{j=1}^{N-1}\iint_{\Omega\times\Omega}\overline{K}_h(x-y)|q_j^x-q_j^y|^2\dd x\dd y + C\mathcal{R}_h(t) +\frac{C}{|\log h|}. \]
Similarly as for $d$, we have
\[\begin{aligned} \iint_{\Omega\times\Omega} \overline{K}_h(x-y)|q_j^x-q_j^y|^2\dd x\dd y \leq \frac{\|\nabla q_j\|_{L^2}^2}{|\log h|}\int_{\mathbb{R}^3} K_h(z)|z|^2\dd z.
\end{aligned} \]
Therefore in the end we get
\[ A_2 \leq C\mathcal{R}_h(t) + \frac{C}{|\log h|}\left(\|\nabla d\|_{L^2(\Omega)}^2+\sum_{j=1}^{N-1}\|\nabla q_j\|_{L^2(\Omega)}^2\right). \]
In conclusion, 
\[\begin{aligned} \frac{\dd}{\dd t}\mathcal{R}_h(t) \leq & C\iint_{\Omega\times\Omega} \overline{K}_h(x-y)M|\nabla u^x|(\varrho^x-\varrho^y)^2\dd x\dd y+ C\mathcal{R}_h(t) \\
&+ \frac{C}{|\log h|}\left(\|\nabla d\|_{L^2(\Omega)}^2+\sum_{j=1}^{N-1}\|\nabla q_j\|_{L^2(\Omega)}^2\right) \end{aligned}\]
and combining all estimates, we arrive at
\[\begin{aligned} \frac{\dd}{\dd t}\mathcal{R}_h(t) \leq C\iint_{\Omega\times\Omega} \overline{K}_h(x-y)M|\nabla u^x|(\varrho^x-\varrho^y)^2\dd x\dd y + C\mathcal{R}_h(t) + \frac{C}{|\log h|}. \end{aligned}\]

For the first term, as the maximal function is bounded in $BMO$ (cf. Theorem 4.2 in \cite{bmo_max}), we use the following logarithmic inequality from \cite{mucha-rusin}:
\[ \left|\int_{\mathbb{R}^d} f(x)g(x)\dd x\right| \leq C\|f\|_{BMO}\|g\|_{L^1}\big(|\log\|g\|_{L^1}|+\log(1+\|g\|_{L^\infty})\big) \]
applied to $M|\nabla u^x|$ and $\displaystyle\int_{\Omega} \overline{K}_h(x-y)(\varrho^x-\varrho^y)^2\dd y$ extended by $0$ outside $\Omega$. We get
\begin{multline*} \iint_{\Omega\times\Omega} \overline{K}_h(x-y)M|\nabla u^x|(\varrho^x-\varrho^y)^2\dd x\dd y
\leq C\|\nabla u\|_{BMO} \mathcal{R}_h(t)\left(|\log \mathcal{R}_h(t)| + \log(1+C\|\varrho\|_{L^\infty}^2)\right). \end{multline*}

and after integrating over time, we end up with
\[\begin{aligned} \mathcal{R}_h(t) - \mathcal{R}_h(0) &\leq C\int_0^t \mathcal{R}_h(\tau)(1+|\log \mathcal{R}_h(\tau)|)\dd\tau + \frac{C}{|\log h|}\left(\|\nabla d\|_{L^2(0,T;L^2)}^2+\sum_{j=1}^{N-1}\|\nabla q_j\|_{L^2(0,T;L^2)}^2\right) \\
&\leq C\int_0^t \mathcal{R}_h(\tau)(1+|\log \mathcal{R}_h(\tau)|)\dd\tau + \frac{C}{|\log h|}.
\end{aligned}\]

Then
\[ \limsup_{k\to\infty}\sup_t R_h(t) \to 0 \quad \text{as} \quad h\to 0, \]
by the standard comparison criterion and the following Proposition (with $\varepsilon=|\log h|^{-1}$):
\begin{prop}
Let 
\[ 0\leq x_\varepsilon(t)\leq x_\varepsilon(0)+\varepsilon + \int_0^t x_\varepsilon(|\ln x_\varepsilon|+1)\dd\tau \]
with $x_\varepsilon(0)\to 0$ as $\varepsilon\to 0$.
Then $\displaystyle\sup_{t\in [0,T]} x_\varepsilon(t) \to 0$ as $\varepsilon\to 0$.
\end{prop}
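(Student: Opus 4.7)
The plan is to reduce the integral inequality to a scalar ODE that can be solved in closed form, exploiting the Osgood-type structure of the modulus $f(s):=s(1+|\ln s|)$ (with $f(0):=0$). Setting
\[ \phi_\varepsilon(t):=x_\varepsilon(0)+\varepsilon+\int_0^t x_\varepsilon(\tau)\bigl(|\ln x_\varepsilon(\tau)|+1\bigr)\,\dd\tau, \]
one has $x_\varepsilon(t)\leq\phi_\varepsilon(t)$ by hypothesis, and since $f$ is continuous and monotone non-decreasing on $[0,\infty)$ it follows that $\dot\phi_\varepsilon=f(x_\varepsilon)\leq f(\phi_\varepsilon)$, i.e. $\phi_\varepsilon$ is a subsolution of $\dot y=f(y)$.

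Next I would introduce, for an auxiliary shift $\eta>0$, the solution $y_{\varepsilon,\eta}$ of $\dot y=f(y)$ with $y(0)=x_\varepsilon(0)+\varepsilon+\eta$. Separation of variables using the substitution $u=1-\ln y$ in the regime $y\in(0,1)$ yields the explicit formula
\[ y_{\varepsilon,\eta}(t)=\exp(1-e^{-t})\bigl(x_\varepsilon(0)+\varepsilon+\eta\bigr)^{e^{-t}}, \]
valid on $[0,T]$ for $\varepsilon,\eta$ small enough. A standard comparison argument — if $t_0$ were the first contact point, then monotonicity of $f$ together with the integral inequality would force $\phi_\varepsilon(t_0)\leq y_{\varepsilon,\eta}(t_0)-\eta$, a contradiction — gives $\phi_\varepsilon<y_{\varepsilon,\eta}$ on $[0,T]$. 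Passing $\eta\to 0$ in the closed-form expression produces
\[ \sup_{t\in[0,T]}x_\varepsilon(t)\leq e^{1-e^{-T}}\bigl(x_\varepsilon(0)+\varepsilon\bigr)^{e^{-T}}, \]
and the right-hand side tends to $0$ as $\varepsilon\to 0$ because $e^{-T}>0$ and $x_\varepsilon(0)+\varepsilon\to 0$.

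The main obstacle is the loss of Lipschitz regularity of $f$ at zero, which both blocks a direct Gronwall argument and destroys uniqueness of the limit ODE at the datum $y(0)=0$. The shift $\eta>0$ circumvents both issues by keeping the initial value strictly positive, so that the explicit formula is genuinely a solution and the comparison with $\phi_\varepsilon$ is classical; the effect of the underlying Osgood condition $\int_0^1 \dd u/f(u)=\infty$ then appears as the exponent $e^{-t}\in(0,1]$ in the final bound, which forces the bound to vanish whenever the initial value does.
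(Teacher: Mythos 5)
Your proof is correct and follows the same basic strategy as the paper: bound the right-hand side $\phi_\varepsilon$ by the solution of the Osgood-type ODE $\dot y=y(|\ln y|+1)$ via monotonicity of $s\mapsto s(1+|\ln s|)$ and a comparison principle. The only difference is in how the ODE is handled: the paper uses the crude bound $|\ln z_\varepsilon(t)|\leq|\ln z_\varepsilon(0)|$ plus Gronwall, which yields $z_\varepsilon(t)\leq e^t z_\varepsilon(0)^{1-t}$ only for $t<1$ and therefore requires iterating over consecutive subintervals, whereas you integrate the equation exactly (via $u=1-\ln y$) and obtain the closed-form bound $e^{1-e^{-T}}(x_\varepsilon(0)+\varepsilon)^{e^{-T}}$ valid on all of $[0,T]$ in one step; this is a slightly cleaner quantitative version of the same argument, with the auxiliary shift $\eta>0$ correctly handling the lack of Lipschitz continuity at zero.
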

\begin{proof}
Denote $\displaystyle y_\varepsilon(t) = x_\varepsilon(0)+\varepsilon + \int_0^t x_\varepsilon(|\ln x_\varepsilon|+1)\dd\tau$. As the function $x(|\ln x|+1)$ is increasing, we have
\[ \dot y_\varepsilon = x_\varepsilon(|\ln x_\varepsilon|+1) \leq y_\varepsilon(|\ln y_\varepsilon|+1). \]
Therefore from the standard comparison criterion for ODEs we have
\[ x_\varepsilon(t) \leq y_\varepsilon(t) \leq z_\varepsilon(t), \]
where $z_\varepsilon$ solves
\[ \dot z_\varepsilon = z_\varepsilon(|\ln z_\varepsilon|+1), \quad z_\varepsilon(0)=x_\varepsilon(0)+\varepsilon. \]
However, it is now easy to see that $\displaystyle \sup_{t\in [0,T]}z_\varepsilon(t)\to 0$ as $\varepsilon\to 0$. Indeed, as $\dot{z}_\varepsilon\geq 0$ we have $z_\varepsilon(t)\geq z_\varepsilon(0)$. Then, for the range of $t$'s small enough that $z(t)<1$, we get $|\ln z_\varepsilon(t)|\leq |\ln z_\varepsilon(0)|$. Then from Gronwall's lemma
\[ z_\varepsilon(t)\leq z_\varepsilon(0)e^{t+t|\ln z_\varepsilon(0)|} = e^t(z_\varepsilon(0))^{1-t}. \]
Therefore $\sup_{t\in [0,t_1]}z_\varepsilon(t)\to 0$ for some $t_1<1$. Note that in particular $z_\varepsilon(t_1)\to 0$, hence we are able to repeat these estimates on the consecutive intervals, and in consequence obtain the convergence on any finite interval $[0,T]$.
\end{proof}

In conclusion, the sequence $(\varrho_\varepsilon)$ is compact in $L^2((0,T)\times\Omega)$. Now, from the higher space regularity of $q_j$, we can extract compactness of all particular densities as well. For each $i$ we simply put 
\[\begin{aligned} \int_0^T\iint_{\Omega\times\Omega} \overline{K}_h(x-y) & |\varrho_{i,\varepsilon}(t,x)-\varrho_{i,\varepsilon}(t,y)|^2\dd x\dd y\dd t \\
\leq & C\int_0^T\iint_{\Omega\times\Omega}\overline{K}_h(x-y) |\varrho_\varepsilon(t,x)-\varrho_\varepsilon(t,y)|^2\dd x\dd y\dd t \\
&+ C\sum_{j=1}^{N-1}\int_0^T\iint_{\Omega\times\Omega}\overline{K}_h(x-y)|q_{j,\varepsilon}(t,x)-q_{j,\varepsilon}(t,y)|^2\dd x\dd y\dd t \\
\leq & C\int_0^T\iint_{\Omega\times\Omega} \overline{K}_h(x-y)|\varrho_\varepsilon(t,x)-\varrho_\varepsilon(t,y)|^2\dd x\dd y\dd t \\
&+ \frac{C}{|\log h|}\sum_{j=1}^{N-1}\|\nabla q_{j,\varepsilon}\|_{L^2((0,T)\times\Omega)}^2
\end{aligned}\]
and then the compactness of $(\varrho_{i,\varepsilon})$ follows immediately from Proposition \ref{BJ}.

\section{The general diffusing/non-diffusing case}

In the last section, we briefly present the necessary modifications for the system (\ref{diff_nondiff}). In that case, we need to divide the total density and the pressure into two parts, where the first part depends only on the diffusive components. In other words, $p=p^{(1)}+p^{(2)}$ and $\varrho=\varrho^{(1)}+\varrho^{(2)}$, where
\[ p^{(1)} = \sum_{i=1}^{N_1} p_i(\varrho_i), \quad \varrho^{(1)}=\sum_{i=1}^{N_1}\varrho_i \]
and $N_1$ is the number of diffusive components. With such decomposition of $p$ and $\varrho$, we define the fluxes only in terms of the first $N_1$ components, namely
\[ F_i = \nabla p_i - \frac{\varrho_i}{\varrho^{(1)}}\nabla p^{(1)}, \quad i=1,\ldots, N_1. \]
Note that with the above definition we preserve the important properties of $F_i$'s. In consequence, we are able to repeat the arguments from Sections 2 and 3 and derive the existence of solutions to the approximate system 
\begin{equation}\label{approx_gen}
    \begin{aligned}
    \partial_t\varrho_{i} + \ddiv(\varrho_{i} u) -\ddiv F_{i} &= \omega_{i}(\vec\varrho\,) + \varepsilon\Delta\varrho_i, \quad i=1,\dots,N_1 ,\\
    \partial_t\varrho_j + \ddiv(\varrho_ju) &= \omega_j(\vec\varrho\,) + \varepsilon\Delta\varrho_j, \quad j=N_1+1,\dots, N, \\
   -\mu\Delta u -\nabla((\mu+\lambda)\ddiv u) + \nabla p(\vec\varrho\,) &= 0
    \end{aligned}
\end{equation}
in the same way as before. To apply the reasoning from Section 4, we can perform the Bresch \& Jabin argument together for all the components. Defining
\[ \mathcal{R}_h(t) = \sum_{i=1}^N\mathcal{R}_h^i(t) = \sum_{i=1}^N\frac{1}{\|K_h\|_1}\frac{1}{m_i}\iint_{\Omega\times\Omega} K_h(x-y)|\varrho_{i,k}(t,x)-\varrho_{i,k}(t,y)|^2\dd x\dd y \]
and computing $\frac{\dd}{\dd t}\mathcal{R}_h$, it is clear that for the non-diffusing components the equivalents of the terms $A_1$, $A_2$, $A_4$ and $A_5$ can be dealt with in the same manner, whereas $A_3$ is just equal to $0$. In consequence, the computations from Section 4 give the strong convergence of densities for both diffusing and non-diffusing components. 

\subsection* {Acknowledgments:}
 The first (PBM)  author was partially supported by the National Science Centre grant no.  \\ 2022/45/B/ST1/03432 (Opus). The work of (MS) was supported by the National Science Centre grant no. 2022/45/N/ST1/03900 (Preludium). The work of \v S. N. was supported by Praemium Academiae of \v S. Ne\v casov\' a and by the Czech Science Foundation (GA\v CR) through project GA22-01591S. The Institute of Mathematics, Czech Academy of Sciences, is supported by RVO:67985840.

\bibliographystyle{abbrv}
\bibliography{biblio.bib}

\end{document}